\newtheorem{theorem}{Theorem}
\newtheorem{prop}[theorem]{Proposition}
\newenvironment{proof}{{\em Proof.}}{\hfill\rule{8pt}{8pt}\vskip10pt}
\newtheorem{ex}{Example}
\newtheorem{algorithm}{Algorithm}
\newcommand{\bm}[1]{\boldsymbol{#1}}
\newcommand{\De}{\partial}
\def\D{\,{\rm d}}
\def\R#1{$(\ref{#1})$}
\def\E{{\rm e}}
\newcommand{\norm}[1]{\| #1 \|}
\begin{document}

\title{The Euler equation of quasi-geostrophic fluids and volume preserving numerical methods}
\author{Antonella Zanna}
\date{\today}
\maketitle

\begin{abstract}
We consider the Euler equation of quasi-geostrophic fluids which is widely used in weather forecast. 
Our goal is to study explicit volume-preserving numerical methods for very long simulations on an energy and enstrophy preserving discretization. To this purpose, we compute the average fields and estimate statistical parameters. It is observed that the statistical parameters depend on the integrals of the discretization and that the computed parameters are affected by the error in those, even if the volume of the phase space is correctly preserved. We conclude that monitoring the error in the integrals by the explicit volume preserving method can be used as an indication of how good the estimated parameters are.
\end{abstract}

\section{Introduction}
This paper deals with the problem of computing long-time trajectories for the Euler equations of quasi-geostrophic fluids by means of volume preserving numerical methods. Such equations are widely used in applications like weather and climate predictions on Earth as well as for other gas planets, such as Jupiter. Due to the highly inhomogeneities in the atmosphere or ocean, there is a wide range of unresolved scales of motion and the system displays a seemingly chaotic behaviour. As a consequence, it is impossible to simulate particular solutions with a prescribed accuracy. The application of ideas from statistical mechanics to these geostrophic flows, on the other hand, has lead to promising predictions for the ocean, atmosphere and giant planets, in agreement with contemporary observations \cite{majda2006non}.  

The statistics are obtained by performing very long numerical simulations for some initial condition. Because of the seemingly chaotic behaviour and the large number of unresolved scales, the numerical approximation produced will be far off the trajectory one was set to simulate. However, as long as the iteration map is ergodic, time averages are equal to space averages almost everywhere.\footnote{Under some continuity assumptions on the map, the measure and the characteristic function, the "almost everywhere" convergence can be replaced by "everywhere" convergence.} If the iteration map is ergodic, the long time trajectory will visit a large number of points in the phase space with the same probability, and the time averages will converge to the spatial averages. Ergodicity is equivalent to the preservation of an invariant measure (preservation of a volume form).
Thus it makes sense to study the effect of volume preserving integrators on the long-time dynamic of such systems.
Our starting point will be  Arakawa's energy and enstrophy preserving semi-discretization \cite{arakawa1966computational}, which has been shown to preserve volume in phase space \cite{MR2442396}. 

We will investigate an explicit volume preserving numerical integrator based on splitting methods. The method does not preserve exactly the underlying integrals of the semi-discretization, however it is  self-adjoint and therefore expected to preserve some nearby integrals by virtue of backward error analysis. In order to construct an explicit splitting method, we will show that the $i$th component of the vector field does not depend on the $i$th variable -- thus the differential equation is \emph{diagonal free} in addition to be divergence free (the latter was shown in \cite{MR2442396}). This will allow us to split the vector field in \emph{canonical shears}, which are volume preserving, see \cite{mclachlan04egi,mclachlan09evp}. To retain the right complexity, the splitting must be performed in such a way that computing one step of the explicit method does not cost more than  evaluating the vector field  ($O(N^4)$ operations, where $N^2$ is the number of variables), which is a tricky task due to the presence of the Laplacian operator, that mixes up all the variables. This will be discussed in Section~\ref{sec:constructionvp}. The simplest way to implement the canonical splitting is by solving in a sequential order for the variables. However, our preliminary numerical tests showed that this yields a poor method in terms of error propagation. It is known that chessboard-like ordering of the variables can yield better error propagation for some applications (see for instance \cite{frank97gif}). We will also propose a new ordering method (see \S\ref{sec:indexing}) that is obtained by minimizing a measure of the error of the splitting method: we identify the commuting vector fields, collect them together, and add, at each step, those that whose commutation error is minimal among the remaining one. This aggregation gives a significant improvement for the error accumulation of the numerical simulations.
We perform a number of long term simulations, for the different indexing strategies and different number of variables in \S\ref{sec:numerical}. 

The conclusions of this work can be summarized as follows: for a relatively small number of discretization points, the second order explicit volume preserving numerical method gives satisfactory results when implemented with our indexing strategy, and the errors on the conserved quantities stay small over a very long integration time, yielding statistically reliable estimated parameters. However, when the number of discretization points increase, there is a deterioration of the long-time conservative properties of the methods, even with our proposed indexing strategy. Thus, it might be appropriate to increase the order of the method. 
Although it is disappointing to observe this effect of the dimensionality on the long-time conservative properties of the proposed volume preserving scheme, there is also a constructive side, as the error on the conserved quantities can be used to monitor the error on the statistical quantities of interest and hence is a measure of their reliability.

\section{The quasi-geostrophic model}
We will consider one of the  simplest geophysical models, a barotropic two-dimensional flow with topography  on a periodic domain. The model is described by the differential equations
\begin{displaymath}
	q = \Delta \psi+h, \qquad \frac{\De q}{\De t} + \nabla \psi \times \nabla q = 0,
\end{displaymath}
where $q$ is the vorticity, $\psi$ is the stream function and  $\nabla u = [\De_x u, \De_y u]^T$ and $h$ is the topography.
All functions are defined on a a square $[0, 2\pi]\times [0,2\pi]$, and we assume periodic boundary conditions.
Due to the commuting properties of the differential operators and the differentiation rules, the equation can be written in the form
\begin{equation}
	\frac{\De q}{\De t} -\mathcal{J}_\mathcal{M}(q,\psi) = 0,
\end{equation}
where $\mathcal{J}_\mathcal{M}$ is any of the following equivalent formulations
\begin{eqnarray}
	\mathcal{J}_0(u,v) &=& \nabla u \times \nabla v \label{eq:J0}\\ 
	\mathcal{J}_\mathcal{E}(u,v) &=& \nabla \times (u \nabla v)\label{eq:JE}\\
	\mathcal{J}_\mathcal{Z}(u,v) &=& -\nabla \times(v \nabla u). \label{eq:JZ}
\end{eqnarray}
The system is Hamiltonian, with Poisson bracket
\begin{equation}
	\{F, G\} = \int q \mathcal{J}(\nabla F, \nabla G ) \D \mathbf{x}
	\label{eq:Pbracket}
\end{equation}
and Hamiltonian energy functional
\begin{displaymath}
	\mathcal{E}(q) = -\frac12 \int \psi (q-h) \D \mathbf{x}.
\end{displaymath}
The Poisson bracket is degenerate and the system possesses an infinite set of Casimir invariants, of the form
\begin{displaymath}
	\int f(q) \D \mathbf{x},
\end{displaymath}
where $f$ is any scalar function. It is customary to consider invariants of the form $\int q^n \D \mathbf{x}$, for $n=1,2, \ldots$ (vorticity moments). The most important are the the first moment, also referred as the \emph{total circulation}
\begin{equation}
	\mathcal{C}(q) = \int q \D \mathbf{x}
	\label{eq:total_circ}
\end{equation}
(or total vorticity), and the second moment of vorticity, also called \emph{enstrophy},
\begin{equation}
	\mathcal{Z}(q) = \frac12 \int q^2 \D \mathbf{x}. 
	\label{eq:enstrophy}
\end{equation}
The higher moments do not seem to have much relevance in applications, except for the \emph{third moment}
\begin{displaymath}
	\frac13 \int q^3 \D \mathbf{x},
\end{displaymath}
which has been seen to have some relevance in the statistical analysis of the discretized flow \cite{abramov2003statistically}.

\section{The conservative Arakawa decompositions and volume preservation}
Arakawa \cite{arakawa1966computational} considered semi-discretizations of the above equations, in which the differential operators were replaced by standard central divided difference operators:
\begin{displaymath}
	\De_x \approx D_x, \qquad \De_y \approx D_y. 
\end{displaymath}
In the discussion that follows, it is important that the operators are skew-symmetric, i.e.\ $D_x^T = -D_x, D_y^T=-D_y$. We consider a symmetric discrete approximation to the Laplacian,  $\Delta_N= \Delta_N^T$, and, for the time being, we assume it to be invertible.\footnote{When this is not the case, a symmetric pseudo-inverse $\Delta_N^{-1}$  will be used instead, see discussion later.}
We introduce a discretization of the domain. Thus, for an arbitrary function $u$, we set  $u_{i,j} \approx u(i \delta, j \delta)$, where $\delta= 2\pi/N$ is the spacing in the $x$ and  $y$ directions (isotropic discretization) and $N$ is the number of mesh points in each direction (for a total of $N^2$ grid points and as many variables). Further, by $\mathbf{u}$, we will denote the vector of the $u_{i,j}$ using the standard column-wise ordering.
Letting 
\begin{displaymath}
	\bm{\psi} = \Delta_N^{-1} (\mathbf{q-h}),
\end{displaymath}
we can express the equations in terms of the sole variable $\mathbf{q}$, so that the semi-discretized system takes the form
\begin{equation}
	\dot{\mathbf{q}} = J_\mathcal{M}(\mathbf{q}),
	\label{eq:JMflow}
\end{equation}
where $J_\mathcal{M}$ is one of the expressions below:
\begin{eqnarray}
	J_0(\mathbf{q}) &=& (D_x \mathbf{q}) * (D_y \Delta_N^{-1} (\mathbf{q-h})) -  (D_y \mathbf{q}) * (D_x \Delta_N^{-1} (\mathbf{q-h})), \label{eq:J0d} \\
	J_\mathcal{E}(\mathbf{q}) &=& D_x (\mathbf{q} * D_y \Delta_N^{-1} (\mathbf{q-h})) - D_y (\mathbf{q} * D_x \Delta_N^{-1} 
	(\mathbf{q-h})), \label{eq:JEd}\\
	J_\mathcal{Z}(\mathbf{q}) &=& D_y((D_x \mathbf{q}) * \Delta_N^{-1} (\mathbf{q-h})) - D_x((D_y \mathbf{q}) * \Delta_N^{-1} (\mathbf{q-h})), \label{eq:JZd}
\end{eqnarray}
$\mathcal{M}=0,\mathcal{E},\mathcal{Z}$, which are finite difference discretizations of the corresponding formulations \R{eq:J0}-\R{eq:JZ}.  The `$*$' operator denotes componentwise multiplication of vectors, $(\mathbf{u}* \mathbf{v})_i = u_i v_i$, so that the product
\begin{displaymath}
	\mathbf{u}^T (\mathbf{v}* \mathbf{w}) = \sum_i u_i v_i w_i = \mathbf{v}^T (\mathbf{u}* \mathbf{w}) = \mathbf{w}^T (\mathbf{u}* \mathbf{v})
\end{displaymath}
is fully symmetric.
Differently from the analytical case, the semi-discretizations \R{eq:J0d}-\R{eq:JZd} \emph{are not} equivalent, rather, they have very different properties. All of them preserve the discrete version of the total vorticity,
\begin{equation}
	C(\mathbf{q}) = \mathbf{q}^T \mathbf{1} \delta^2,
	\label{eq:C_N}
\end{equation}
otherwise, the flows have different conservation properties.
Arakawa showed that the choice \R{eq:JEd} preserves a discrete version of the energy,
\begin{equation}
	E(\mathbf{q}) = -\frac12 \bm{\psi}^T (\mathbf{q-h})\delta^2,
	\label{eq:E_N}
\end{equation}
(but not the enstrophy), while \R{eq:JZd} preserves the discrete enstrophy,
\begin{equation}
	Z(\mathbf{q}) = \frac12 \mathbf{q}^T \mathbf{q} \delta^2
	\label{eq:Z_N} 
\end{equation}
(but not the energy). $J_0$ preserves neither the discrete energy \R{eq:E_N} nor the enstrophy \R{eq:Z_N}.

In addition to \R{eq:J0d}-\R{eq:JZd}, Arakawa considered also a semi-discretized flow that preserves energy \emph{and} enstrophy (and also linear momentum),
\begin{equation}
	J_\mathcal{EZ} (\mathbf{q}) = \frac13 (J_{0} (\mathbf{q})+J_\mathcal{E} (\mathbf{q}) + J_\mathcal{Z} (\mathbf{q})).
	\label{eq:JEZd}
\end{equation}
The proofs of conservation can be found in the original work of Arakawa. A more modern approach, based on Nambu bracket formalism, can be found in \cite{MR2442396}.

The semi-discretizations \R{eq:J0d}-\R{eq:JZd} and \R{eq:JEZd} are also \emph{volume} preserving. A proof, using the skew-symmetry of the discretized differential operators $D_x, D_y$ and the symmetry of $\Delta_N^{-1}$, as well as the properties of the trace operators, can be found in \cite{MR2442396}.


\section{Volume preserving numerical methods}
The divergence-free differential equation
\begin{displaymath}
	\left\{ \begin{array}{l}
	\dot{\mathbf{q}} =\mathbf{f}(\mathbf{q}),\\
	\mathbf{q}(0) = \mathbf{q}_0,
	\end{array} \right.
	 \qquad \nabla^T \mathbf{f} = \sum_{i} \frac{\partial{f_i}}{\De q_i} = 0,
\end{displaymath}
preserves the volume, as it has the property that $\det \partial \mathbf{q}/\partial \mathbf{q}_0=1$.
Similarly, a numerical method for $\dot{\mathbf{q}} =\mathbf{f}(\mathbf{q})$, 
\begin{displaymath}
	\mathbf{q}_{n+1} = \bm{\Psi}_{\tau,\mathbf{f}} (\mathbf{q}_n), \qquad n=0,1,\ldots,
\end{displaymath}
is  \emph{volume preserving} provided that
\begin{equation}
	\det \frac{\De \bm \Psi_{\tau, \mathbf{f}}(\mathbf{q} )}{\De \mathbf{q}}=1,
	\label{eq:vp}
\end{equation}
or, equivalently, $\det [{\De \mathbf{q}_{n+1} }/{\De \mathbf{q}_{n}}] =1$ for all $n$ (whenever $\nabla^T \mathbf{f}=0$). Following \cite{mclachlan09evp}, a system obeying the condition
\begin{displaymath}
	\frac{\partial{f_i}}{\De q_i} = 0, \qquad i=1, 2, \ldots,
\end{displaymath}
is called \emph{diagonal-free} system, since the $i$-th component does not depend on the $i$-th variable. Diagonal-free systems are obviously divergence free, but the converse is not true, as there exists divergence-free system that are not diagonal free. 
It is easier to design volume preserving numerical methods for diagonal-free systems, while the general case with nonzero diagonal contributions is typically more complicated \cite{mclachlan09evp}.

Let us consider a splitting method, based on the splitting of the function $\mathbf{f}$ in $M$ ``sub vector fields'' 
\begin{displaymath}
	\mathbf{f} = \mathbf{F}^1 + \cdots + \mathbf{F}^M,
\end{displaymath}
each of them also divergence free. Assume that each vector field $\mathbf{F}^k$ is  integrated exactly by $\bm{\Psi}_\tau^k$, for $k=1, \ldots, M$. Then, the composition
\begin{displaymath}
	\bm{\Psi}_{\tau, \mathbf{f}} = \bm{\Psi}_\tau^1 \circ \cdots \circ  \bm{\Psi}_\tau^M
\end{displaymath}
is also volume preserving and yields an order one method. We will consider the \emph{symmetric} composition,
\begin{equation}
	\bm{\Psi}_{\tau,\mathbf{f}} = \bm{\Psi}_{\tau/2}^1 \circ \cdots \circ \bm{\Psi}_{\tau/2}^{M-1} \circ \bm{\Psi}_\tau^M \circ \bm{\Psi}_{\tau/2}^{M-1}\circ \cdots \circ \bm{\Psi}_{\tau/2}^1,
	\label{eq:sym_comp}
\end{equation}
which yields a second order method (each $ \bm{\Psi}_\tau^i$ is an exact integrator, thus self-adjoint, $\bm{\Psi}_\tau^i=[ \bm{\Psi}_\tau^i]^*$).

A simple and natural splitting of diagonal-free systems in vector fields that can be solved exactly is that of \emph{canonical shears}, advancing one coordinate at a time, while keeping the others constant. In detail, we take $M$ to be the number of variables, and set $\mathbf{F}_i$ to be the vector field equal to $f_i$ for the $i$-th variable, and $0$ for the remaining: for $i=1,\ldots,M$,
\begin{displaymath}
	\dot{\mathbf{q}} = \mathbf{F}_i (\mathbf{q}): \qquad 
	\begin{array}{l}
	\dot{q}_i = f_i (\mathbf{q}),\\
	\dot{q}_j = 0, \qquad j=1,\ldots,M, \quad j \not = i.
	\end{array}
\end{displaymath}
It is immediate to see that the variables $q_j$, $j\not= i$ are constant, while the $i$-th variable can be integrated exactly by a step of the Forward Euler method, as $f_i$ depends solely on the other variables. In other words, the exact solution is given by
\begin{displaymath}
	\mathbf{z}=\bm{\Psi}^i_\tau(\mathbf{q})= \E^{\tau \mathbf{F}_i} \mathbf{q} : \qquad 
	\begin{array}{l}
	z_i = q_i + \tau f_i (\mathbf{q}),\\
	z_j = q_j, \qquad j=1, \ldots,M, \quad j \not = i.
	\end{array}
\end{displaymath}
Since $\bm{\Psi}_\tau^i$ solves $\dot{\mathbf{q}} = \mathbf{F}_i (\mathbf{q})$ exactly and $\mathbf{F}_i$ is divergence free, it is volume preserving. Alternatively, one can proof directly the condition \R{eq:vp} by observing that the matrix $\frac{\De \bm \Psi(\mathbf{q} )}{\De \mathbf{q}}$ differs from the identity matrix only for the off-diagonal elements in row $i$, and clearly it has determinant one. Consequently, the symmetric composition \R{eq:sym_comp}  yields an explicit,  symmetric, order two, volume preserving integrator.

It must be noted that composition methods do not necessarily preserve first integrals and/or conserved quantities. A first integral $I(t)$ will be preserved if and only if each of the flows $\bm{\Psi}_\tau^k$ also preserves $I(t)$, which is typically not the case of the shears. So, the symmetric composition method \R{eq:sym_comp} is not expected to preserve the integrals and the Casimirs of the  $\mathcal{EZ}$ discretization. However, due to the symmetry of the composition \R{eq:sym_comp}, there exist theorems that guarantee that the conserved quantities are approximately well preserved over long simulation times \cite{hairer02gni, leimkuhler04shd}.

\section{Construction of the VP numerical method}
\label{sec:constructionvp}
In the context of the QG equations, we have $\mathbf{f}(\mathbf{q}) \equiv J_\mathcal{M}(\mathbf{q})$, so that $f_i$ is the $i$th component of $J_\mathcal{M}$. Since $\bm{\psi} = \Delta_N^{-1} (\mathbf{q-h})$, the $J_\mathcal{M}$ vector fields are all quadratic in the $\mathbf{q}$ variable. This means that they can be written in the form
\begin{equation}
	f_i(\mathbf{q}) =\sum_{k,l} a^i_{k,l} (q_k - h_k) q_l = \mathbf{(q-h)}^T A^i \mathbf{q},	\qquad i=1, \ldots, N^2,
	\label{eq:fi}
\end{equation}
where $A^i= (a^i_{k,l})$. We have omitted the dependence of the matrices and their elements on the vector field $\mathcal{M}=0, \mathcal{E, Z, EZ}$ for sake of simiplicity. In order to be able to apply the canonical shears methods described above, we must make sure that the vector field is diagonal-free. To do so, we study the coefficients $a^i_{k,l}$.

The coefficients $a^i_{k,l}$ can be computed explicitly and once for all at the beginning of the numerical simulations. They do not depend on the topography or the vorticity, but only on the differential operators. Let $L_x=D_x \Delta_N^{-1}$ and $L_y=D_y \Delta_N^{-1}$. Then:
\begin{eqnarray}
	\mathcal{EZ}:\qquad 	a^i_{k,l} &=& \frac13 \Big((D_x)_{i,l}[(L_y)_{l,k}+(L_y)_{i,k}] - (D_y)_{i,l}[(L_x)_{l,k}+(L_x)_{i,k}]  \nonumber \\
	&& \qquad \mbox{}+ \underbrace{[(D_x)_{i,:}*(D_y)_{l,:}- (D_y)_{i,:}*(D_x)_{l,:} ]}_{\hbox{row vector}} \underbrace{(\Delta_N^{-1})_ {:,k}}_{\hbox{col.\ vector}} \Big), \label{eq:aikl_EZ} \\
	0: \qquad 		a^i_{k,l} &=&  (D_x)_{i,l}(L_y)_{i,k} - (D_y)_{i,l}(L_x)_{i,k} \label{eq:aikl_0}\\
	\mathcal{E}: \qquad		 a^i_{k,l} &=&  (D_x)_{i,l}(L_y)_{l,k} - (D_y)_{i,l}(L_x)_{l,k}  
	\label{eq:aikl_E} \\
	\mathcal{Z}: \qquad 		a^i_{k,l} &=& \underbrace{[(D_x)_{i,:}*(D_y)_{l,:}- (D_y)_{i,:}*(D_x)_{l,:}]}_{\hbox{row vector}}\underbrace{(\Delta_N^{-1})_ {:,k}}_{\hbox{col.\ vector}}, \label{eq:aikl_Z}
\end{eqnarray}
where  we have used Matlab semicolon notation to denote rows/columns of matrices.

We use the standard five point stencil with periodic boundary conditions.  Let $E$ the $N\times N$ shift matrix (periodic). Then,
\begin{displaymath}
	D = \frac{E-E^T}{2\delta}, \qquad D_x = I\otimes D, \quad D_y = D \otimes I.
\end{displaymath}
where $\delta =2\pi/N$. Similarly,
\begin{equation}
	D_2 = \frac{E-2I + E^T}{\delta^2}, \qquad \Delta_N = D_2 \otimes I + I \otimes D_2.
	\label{eq:D2DD}
\end{equation}
For periodic boundary conditions, the matrix $D_2$ is circulant hence singular. In turn, $\Delta_N$ is block circulant, with circulant block, and singular. Further, if the matrix $D_2$ is symmetric, also the blocks in $\Delta_N$ are symmetric.

The matrix $\Delta_N$ has a rank-one subspace corresponding to constant solutions. Thus, we need an ``inverse" that projects constant functions to zero. The Moore--Penrose pseudo-inverse matrix $\Delta_N^+$ has this property. Let 
\begin{displaymath}
	\Delta_N = V \Lambda V^T
\end{displaymath} 
the an orthogonal eigenvalue decomposition of $\Delta_N$ (which exists, since $\Delta_N$ is symmetric, hence normal). One eigenvalue of $\Delta_N$, hence one diagonal element of $\Lambda$, will be equal to zero. Define the matrix $\Lambda^+$ with diagonal elements
\begin{displaymath}
	\lambda^+_{i,i} = \left\{ \begin{array}{ll}
	1/\lambda_{i,i} & \hbox{if $\lambda_{i,i} \not=0$}\\
				0&  \hbox{ otherwise.} \end{array}\right.
\end{displaymath}
The pseudoinverse  is then defined as $\Delta_N^+= V\Lambda^+ V^{T}$.

Some important observations are in place.

\begin{prop}
Consider the $J_\mathcal{M}$ vector field in \R{eq:JMflow}, whose $i$-th component coefficients are given as in \R{eq:fi}. The $A^i$ matrices, obtained using the divided differences \R{eq:D2DD} operators, are sparse and have only a few number of columns, depending on the choice of the discretized differential operators. In particular, for the $0,\mathcal{E,Z}$ case, only 4 nonzero columns, while for the $\mathcal{EZ}$ case, 8 nonzero columns.
\end{prop}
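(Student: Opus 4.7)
The plan is to reduce the sparsity claim to a one-line bookkeeping exercise on the support of the rows/columns of $D_x$ and $D_y$, together with the observation that in each of the formulas \R{eq:aikl_EZ}--\R{eq:aikl_Z} the index $l$ only enters through these two operators.

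First I would record the elementary fact that, because the one-dimensional central difference $D=(E-E^T)/(2\delta)$ has exactly two nonzero entries per row (at the cyclic neighbors $i\pm 1$), the tensor product operators satisfy
\begin{displaymath}
   (D_x)_{i,m}\ne 0 \iff m\in\{i-1,i+1\},\qquad (D_y)_{i,m}\ne 0 \iff m\in\{i-N,i+N\}
\end{displaymath}
(indices modulo $N^2$, understood on the $N\times N$ periodic grid). All sparsity counts in the proposition follow from this and the observation that in $(\Delta_N^+)_{:,k}$, $(L_x)_{l,k}$, $(L_y)_{l,k}$ the $l$-dependence that appears in the formulas is effectively ``dense'' in $l$, so it does not cut down the set of nonzero columns of $A^i$.

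Next I would handle the three ``simple'' cases. In \R{eq:aikl_0} and \R{eq:aikl_E} the index $l$ appears only through the factors $(D_x)_{i,l}$ and $(D_y)_{i,l}$, so the only values of $l$ that can give $a^i_{k,l}\ne 0$ are the four cyclic neighbors $\{i-1,i+1,i-N,i+N\}$ of the grid point labeled $i$. For \R{eq:aikl_Z} the $l$-dependence lives inside the row vector $(D_x)_{i,:}*(D_y)_{l,:}-(D_y)_{i,:}*(D_x)_{l,:}$: its $m$-th entry is nonzero only when both factors are, i.e.\ when $m$ lies simultaneously in the support of two of the above singletons. A quick intersection of supports shows the first product is nonzero iff $l\in\{i\pm N\pm 1\}$, and likewise for the second product; the two sets coincide, yielding again four nonzero values of $l$, now the four \emph{diagonal} neighbors.

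For the $\mathcal{EZ}$ case \R{eq:aikl_EZ}, I would simply take the union of the supports obtained above: the first two lines contribute the four axial neighbors $\{i\pm 1, i\pm N\}$, the bracketed term contributes the four diagonal neighbors $\{i\pm N\pm 1\}$, and these eight indices are pairwise distinct on a nondegenerate grid ($N\ge 3$). Hence $A^i$ has exactly eight nonzero columns, indexed by the $3\times 3$ grid stencil around $i$ minus the center. The only place any subtlety could enter would be an accidental cancellation between the three contributions for a specific $l$ and all $k$; since the surviving factors $(L_x)_{l,k}$, $(L_y)_{l,k}$, $(\Delta_N^+)_{:,k}$ are generically nonzero as functions of $k$, this does not occur, so the count is tight. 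This last verification — ruling out cancellation of an entire column across all $k$ — is the only step that is not pure bookkeeping, but it reduces immediately to the nondegeneracy of $\Delta_N^+$ restricted to the orthogonal complement of its kernel.
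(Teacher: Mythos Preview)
Your argument is correct and follows the same line as the paper's proof: both reduce the column count to the support of row $i$ of $D_x$ and $D_y$, and both treat the $\mathcal{EZ}$ case as the union of the others. Your write-up is in fact more precise than the paper's in the $\mathcal{Z}$ case---you correctly identify the four diagonal neighbors $\{i\pm N\pm 1\}$ and note that the two products yield the \emph{same} four values of $l$, whereas the paper's ``two values from the first product, further two from the second'' is a slightly loose accounting that happens to land on the right total. Your closing discussion of tightness (ruling out whole-column cancellation) goes beyond what the paper proves, which only establishes the ``at most'' bound.
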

\begin{proof}
This can be proved as follows.
For a fixed $i$, it is readily observed that $(D_x)_{i,l}$ is nonzero for two values of $l$ (and independently of $k$), and so is $(D_y)_{i,l}$, yielding possibly nonzero elements for four columns for the $0,\mathcal{E}$ cases. 
Further, the row vector $[(D_x)_{i,:}*(D_y)_{l,:}- (D_y)_{i,:}*(D_x)_{l,:} ]$ can be nonzero at at most four values of $l$ (two values from the first product, further two from the second product). This implies that the $\mathcal{Z}$-case matrices also have only four columns. Finally, for the $\mathcal{EZ}$ cases, we have the superposition of the $0,\mathcal{E}$ case and the $\mathcal{Z}$ case, giving 8 columns (at most).
\end{proof}

\begin{prop}
\label{th:prop2}
$f_i(\mathbf{q})$ does not depend on $q_i$, i.e.\ $a^{i}_{i,l} = a^i_{k,i}=0$ for all $k,l=1, \ldots, N^2$. 
\end{prop}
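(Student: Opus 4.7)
The plan is to verify the two vanishing statements $a^i_{k,i}=0$ and $a^i_{i,l}=0$ separately in each of the cases $\mathcal{M}=0,\mathcal{E},\mathcal{Z},\mathcal{EZ}$, since they draw on different pieces of structure. The easy half, $a^i_{k,i}=0$, I expect to follow uniformly from the skew-symmetry of $D_x,D_y$: setting $l=i$ in each of the formulas \R{eq:aikl_EZ}--\R{eq:aikl_Z} produces factors $(D_x)_{i,i}$ or $(D_y)_{i,i}$ in the $0$ and $\mathcal{E}$ contributions, which are zero, while in the $\mathcal{Z}$ piece the inner bracket collapses to $(D_x)_{i,m}(D_y)_{i,m}-(D_y)_{i,m}(D_x)_{i,m}\equiv 0$ term by term; the $\mathcal{EZ}$ case inherits the conclusion as a $1/3$ sum.

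For $a^i_{i,l}=0$ in the $0$ case the statement reduces to $(L_x)_{i,i}=(L_y)_{i,i}=0$. I would obtain this by first observing that $D_x=I\otimes D$ commutes with $\Delta_N=D_2\otimes I+I\otimes D_2$ (both are Kronecker products of polynomials in the circulant shift $E$), so $L_x=\Delta_N^+D_x$; combined with $(\Delta_N^+)^T=\Delta_N^+$ and $D_x^T=-D_x$, this forces $L_x$ to be skew-symmetric, and the same argument handles $L_y$. For the $\mathcal{E}$ case the target is $(D_x)_{i,l}(L_y)_{l,i}-(D_y)_{i,l}(L_x)_{l,i}$, which I plan to dispose of by a parity argument on the translation-invariant kernels: the kernel of $L_y$, viewed as a function of the two-dimensional offset $(l_x-i_x,l_y-i_y)$, is odd in the $y$-offset (one odd factor from $D_y$ against an even Laplacian kernel), and $(D_x)_{i,l}\ne 0$ forces $l_y=i_y$, which kills $(L_y)_{l,i}$; a mirror argument in $x$ disposes of the other term.

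The hard part will be the $\mathcal{Z}$ case, since no single factor is individually zero. My plan is to use the sparsity of the three-point stencil to collapse the inner sum in \R{eq:aikl_Z} to only the two surviving values $m=(l_x,i_y)$ (from the first product) and $m=(i_x,l_y)$ (from the second). After pulling out the common coefficient $D(i_x-l_x)D(l_y-i_y)$ via the skew-symmetry of $D$, what remains is the bracket $K(l_x-i_x,0)-K(0,l_y-i_y)$, where $K$ denotes the translation-invariant kernel of $\Delta_N^+$. The $x\leftrightarrow y$ isotropy of the discrete Laplacian yields $K(\alpha,0)=K(0,\alpha)$, and on the support of the stencil one has $|l_x-i_x|=|l_y-i_y|=1$, so the evenness of $K$ in each argument collapses the bracket to zero. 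The $\mathcal{EZ}$ case then follows by summing the three.
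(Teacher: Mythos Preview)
Your proposal is correct and follows essentially the same case-by-case route as the paper: the $l=i$ half via the zero diagonals of $D_x,D_y$ (and the trivial cancellation in the $\mathcal{Z}$ row vector), the $0$ case for $k=i$ via commutation of $D_x$ with $\Delta_N^+$ together with skew/symmetry, and the $\mathcal{E}$ and $\mathcal{Z}$ cases for $k=i$ via the explicit stencil structure and the symmetry of the $\Delta_N^+$ kernel. Your kernel/parity phrasing for $\mathcal{E}$ and your explicit use of the $x\leftrightarrow y$ isotropy in the $\mathcal{Z}$ case are in fact a tidier articulation of what the paper leaves as a ``tedious and technical'' verification based on the block-circulant, block-symmetric structure of $\Delta_N^+$; the underlying mechanism is the same.
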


\begin{proof}
\underline{In the $0$ case}: for $l=i$, it is obvious that $a^i_{k,i} = 0$ since $D_x,D_y$ are skew symmetric hence they have zero diagonal elements.  In the case $k=i$, we have $(L_x)_{i,i}=(L_y)_{i,i} = 0$. We show the case $(L_x)_{i,i}=0$, the other one is similar. We have $(L_x)_{i,i} = \mathbf{e}_i^T L_x \mathbf{e}_i = \mathbf{e}_i^T D_x \Delta_N^{-1} \mathbf{e}_i =  (\mathbf{e}_i^T D_x \Delta_N^{-1} \mathbf{e}_i)^T = - \mathbf{e}_i^T \Delta_N^{-1}D_x  \mathbf{e}_i = - \mathbf{e}_i^T D_x \Delta_N^{-1} \mathbf{e}_i = -(L_x)_{i,i}$. The third last passage follows from the symmetry of $\Delta_N$ and skew-symmetry of $D_x$ and the second last follows from the fact that the differential operators $D_x$ and $\Delta_N$ commute.
Hence $(L_x)_{i,i}=-(L_x)_{i,i}=0$.

\underline{In the $\mathcal{E}$ case}: for $l=i$, this is the analogous of the $0$ case. For $k=i$, we have $a^i_{i,l} = (D_x)_{i,l} (L_y)_{l,i} - (D_y)_{i,l} (L_x)_{l,i} = (D_x)_{i,l} \sum_{m} (D_y)_{l,m} (\Delta_N^{-1})_{m,i} - (D_y)_{i,l} \sum_m (D_x)_{l,m} (\Delta_N^{-1})_{m,i}$. Because of the tensor-product structure of $D_x$ and $D_y$, it is immediate to observe that $(D_x)_{i,l} (D_y)_{i,l}=0$ for any $i,l$. Hence at most one of them is nonzero at each time.

Assume $(D_x)_{i,l}\not=0$ (hence $(D_y)_{i,l}=0$). We look at  $\sum_{m} (D_y)_{l,m} (\Delta_N^{-1})_{m,i}$. Because of the structure of $D_y$, for any fixed index $l$, there are at most two indices $m_1, m_2$ for which $(D_y)_{l, m_1}$ and $(D_y)_{l, m_2}$ are nonzero. Moreover, $(D_y)_{l, m_1}= -(D_y)_{l, m_2}$, hence it is sufficient to show that $(\Delta_N^{-1})_{m_1,i}=(\Delta_N^{-1})_{m_2,i}$. This latter statement follows because of the structure of $\Delta_N^{-1}$, which  has circulant and symmetric blocks (in addition to being block-circulant and block symmetric), since such is $\Delta_N$.  The case $(D_y)_{i,l}\not=0$ (hence $(D_x)_{i,l}=0$) can be proven in a very similar manner.

\underline{In the $\mathcal{Z}$ case}: for $l=i$, the row vector in \R{eq:aikl_Z} is obviously identically zero, since, as observed above, 
$(D_x)_{i,m} (D_y)_{i,m}=0$ for any $i,m$.

For $k=i$, the proof is a verification as in the $\mathcal{E}$ case. For any $i,l$, either the row vector is identically zero (for instance if the indices $i,l$ correspond to the same block), or, fixed $i$, there exist (at most four) values of $l$ for which the row vector 
$(D_x)_{i,:}*(D_y)_{l,:}$ has a \emph{single} nonzero element, corresponding to the index $m_1$. For the same values of $i,l$, the term  $(D_y)_{i,:}*(D_x)_{l,:}$ has exactly the same \emph{single} nonzero element, but at a position $m_2$. It is true that $(\Delta_N)^{-1}_{m_1,i} = (\Delta_N)^{-1}_{m_2,i}$. The proof is tedious and technical, but, again, it follows from the special structure of $ (\Delta_N)^{-1}$, being block circulant, with circulant blocks, block symmetric, with symmetric blocks, and its connections with convolutions.

Finally, the $\mathcal{EZ}$ case is the combination of the three cases, hence it follows from those.
\end{proof}

\begin{prop}
\label{th:Aisame}
	The matrices $A^i$ in \R{eq:fi} are the identical up to a shift on the physical domain. 
\end{prop}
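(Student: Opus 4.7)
The plan is to exploit the translation invariance of the discretized operators on the periodic $N\times N$ grid. All the operators appearing in the coefficient formulas -- $D_x$, $D_y$, $\Delta_N$, its Moore--Penrose pseudo-inverse $\Delta_N^+$, and hence $L_x = D_x\Delta_N^+$ and $L_y = D_y\Delta_N^+$ -- are built from $I$ and the 1-D circulant shift $E$ via Kronecker products. They therefore commute with the two fundamental grid shifts $T_x = I\otimes E$ and $T_y = E\otimes I$, and hence with every composition $T_x^a T_y^b$, which together generate the full group of periodic translations on the $N^2$ grid.

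First I would convert each commutation $PM = MP$ (with $P$ the permutation matrix of a shift $\pi$) into the entrywise identity $M_{\pi(i),\pi(j)} = M_{i,j}$. Applied to each of the operators above, this gives, for every $i,k,l,m$ and every $\pi$ in the shift group,
\begin{displaymath}
(D_x)_{\pi(i),\pi(l)} = (D_x)_{i,l}, \qquad (L_y)_{\pi(l),\pi(k)} = (L_y)_{l,k}, \qquad (\Delta_N^+)_{\pi(m),\pi(k)} = (\Delta_N^+)_{m,k},
\end{displaymath}
and analogously with $D_y$ and $L_x$.

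Next I would substitute these identities directly into \R{eq:aikl_0}--\R{eq:aikl_EZ}. For the $\mathcal{E}$ case this is immediate:
\begin{displaymath}
a^{\pi(i)}_{\pi(k),\pi(l)} = (D_x)_{\pi(i),\pi(l)}(L_y)_{\pi(l),\pi(k)} - (D_y)_{\pi(i),\pi(l)}(L_x)_{\pi(l),\pi(k)} = a^i_{k,l},
\end{displaymath}
and the $0$ case is essentially identical. For the $\mathcal{Z}$ and $\mathcal{EZ}$ cases the new ingredient is the row-column product $[\,\cdots\,](\Delta_N^+)_{:,k}$, a sum of the form $\sum_m [\,\cdots\,]_m(\Delta_N^+)_{m,k}$; reindexing the dummy variable by $m\mapsto\pi(m)$ and applying the entrywise identities to each factor shows this sum is also shift-invariant. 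Since the $\mathcal{EZ}$ formula is a fixed linear combination of the other three, it inherits the property. Hence in all four cases $a^{\pi(i)}_{\pi(k),\pi(l)} = a^i_{k,l}$, which is the matrix identity $A^{\pi(i)} = P A^i P^T$ -- precisely the statement that the $A^i$ coincide up to a shift on the physical grid.

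The only nontrivial ingredient is the shift-commutation of the pseudo-inverse itself, which I would record as a one-line preliminary observation. Since $\Delta_N$ is symmetric, commutes with the orthogonal $T_x$ and $T_y$, and has a one-dimensional null space (the constants) that is manifestly shift-invariant, the spectral decomposition $\Delta_N = V\Lambda V^T$ yields $\Delta_N^+ = V\Lambda^+ V^T$ that also commutes with $T_x$ and $T_y$. This is a standard fact about Moore--Penrose inverses of shift-equivariant operators; with it in hand the rest of the argument is essentially bookkeeping.
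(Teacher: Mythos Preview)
Your proof is correct and follows the same underlying idea as the paper's own argument: both rest on the translation invariance of the finite-difference operators, which forces the coefficients $a^i_{k,l}$ to depend only on the relative grid positions of $k$ and $l$ with respect to $i$. The paper's proof is a two-sentence sketch of exactly this principle; you have supplied the details the paper omits, in particular the explicit verification that the Moore--Penrose pseudo-inverse $\Delta_N^+$ inherits shift-equivariance from $\Delta_N$, and the reindexing argument needed for the $\mathcal{Z}$ term.
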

\begin{proof}
The coefficient of the matrices are determined only by the relative position of $q_k$, $q_l$ with respect to the variable $q_i$. This is true because the differential operators in the definition of the vector field: the interaction between $q_i$ and $q_k$ depends only on their position in the grid. 
\end{proof}

\subsection{Complexity of the VP scheme for the $\mathcal{EZ}$ discretization}
From this point, we focus on the $\mathcal{EZ}$  case (energy and entropy-preserving discretization, whose $A^i$ matrix elements are given in \R{eq:aikl_EZ}.
The matrices $A^i$ (in fact, because of Prop~\R{th:Aisame}, they can all be identified with a single matrix, that is then shifted over the domain) as well as the differential operators are computed once and for all.
Each $f_i$ evaluation requires a vector-matrix-vector multipication, where the matrices $A^i$ have only $8$ nonzero columns and $N^2$ rows. Hence each $f_i$ requires $8N^2$ multiplications and as many additions. This must be repeated for all the $N^2$ variables, and the result doubled if a symmetric composition is used. Hence the complexity of one step of the symmetric method is $32 N^4$, counting both multiplications and additions. Thus, evaluating one step of the explicit splitting method has the same order of magnitude of computing the whole vector field $\mathbf{f}$.

\subsection{Indexing strategies for the $\mathcal{EZ}$ discretization}
\label{sec:indexing}
Given a splitting of a vector field in $M$ terms, there are $M!$  possible orderings (permutations of indices)  for the symmetric composition \R{eq:sym_comp}. A standard approach would be that of choosing a \emph{sequential} ordering. Other popular approaches are based on \emph{checkerboard} orderings. Our preliminary numerical simulations indicated that these strategies were not satisfactory, hence a better strategy for the ordering of the vector fields was needed, as each ordering will have a different effect insofar error propagation is concerned. 

To have an idea of the error propagation by the splitting method, we analyze \R{eq:sym_comp} by the \emph{symmetric BCH} formula,
\begin{displaymath}
	\E^{\tau/2 \mathbf{F}_1 }\cdots \E^{\tau/2 \mathbf{F}_M}\E^{\tau/2 \mathbf{F}_M} \cdots \E^{\tau/2 \mathbf{F}_1 } = \E^{Z(\tau)},
\end{displaymath}
where 
\begin{displaymath}
	Z(\tau) = \tau\sum_{i=1}^M \mathbf{F}_i -\frac{\tau^3}{12} \sum_{\stackrel{\mbox{\scriptsize $i,j,k=1$}}{j<i,j<k}}^M [\mathbf{F}_i, [\mathbf{F}_j, \mathbf{F}_k]] - \frac{\tau^3}{24} \sum_{\stackrel{\mbox{\scriptsize $i,k=1$}}{i<k}}^M  [\mathbf{F}_i, [\mathbf{F}_i, \mathbf{F}_k]] + O(\tau^5).
\end{displaymath}
The leading error term is governed by the double commutators of vector fields, defined as $[\mathbf{F}, \mathbf{G}] = \nabla \mathbf{F}  \; \mathbf{G} - \nabla \mathbf{G}\;\mathbf{F}$, where $\nabla \mathbf{F}$ is the Jacobian of the vector field $\mathbf{F}$.
It is immediate to observe that collecting together commuting vector fields will decrease the number of error terms, it is reasonable to collect together commuting vector fields. When the vector fields do not commute, minimizing the commutator is still a reasonable strategy. Thus we estimate the norm of the commutators and collect together terms in order of increasing non-commutativity. To this goal, after computing the matrices $A^i$, for $i=1,\ldots, N^2$, for each $A^i$, we look at the column values and the row values. If, for an index $k=1, \ldots, N^2$, $A^i(:,k) =0 $ and $A^i(k,:)=0$, then $f_i$ does not depend on $q_k$. Similarly, if $A^k (:,i) =0$ and $A^k(i,:) =0$, then $f_k$ does not depend on $q_i$, hence $\mathbf{F}_i = f_i \mathbf{e}_i^T $ and $\mathbf{F}_k = f_k \mathbf{e}_k^T$ commute.

Disregarding the dependence on the topography $\mathbf{h}$, the vector fields are of the form $\mathbf{F}_i = (\mathbf{q}^T A^i \mathbf{q}) \mathbf{e}_i$ (i.e.\ they are quadratic and advance one variable at a time, $q_i$), hence $\mathbf{F}_i$ and $\mathbf{F}_j$ will nearly commute if the coefficients in $A^i$ that invoke the $j$-th variable are small. Thus, we look at
\begin{displaymath}
	c^i_j = \sum_{k=1}^{N^2}( |A^i_{j,k}|+ |A^i_{k,j}| ).
\end{displaymath}
The variable $c^i_j$ is a measure of the independence of the vector field $\mathbf{F}_i$ on the variable $q_j$. More formally, we have
\begin{displaymath}
	\nabla \mathbf{F}_i \; \mathbf{F}_j = (\mathbf{q}^T A^i \mathbf{q}) \mathbf{e}_i \mathbf{q}^T \frac{A^i+ {A^i}^T}2 \mathbf{e}_j,
\end{displaymath}
hence, assuming $\norm{\mathbf{q}}_2 \leq 1$ and $\norm{A^j}_2 \leq a $, we can bound
\begin{displaymath}
	\norm{[\mathbf{F}_i, \mathbf{F}_j]}_2 \leq  a c^i_j
\end{displaymath}
for any $i,j=1,\ldots, N$. If $A^i_{j,k}=A^i_{k,j} = 0$ for all $k$, then $\mathbf{F}_i$ is independent of $q_j$ and $\mathbf{F}_i$ and $\mathbf{F}_j$ commute. 

The vector $\mathbf{c}^i=[c^i_1,\ldots, c^i_{N^2}]^T$ is then reshaped to the grid, giving a matrix $C^i$. The matrix $C^i$ is \emph{essentially} the same for all the variables, as the matrices $A^i$ are \emph{essentially} the same for all the variables (cfr.\ Prop.~\ref{th:Aisame}), they only differ by a shift in the location. Boundary wrapping must be considered, as we the problem has periodic boundary conditions, see Figure~\ref{fig:Ci}. 

For simplicity, we illustrate the case when $N=2^p$. The arguments extend also to the generic case $N$ even (our numerical tests are all performed with $N$ even). 
The $C^i$ matrices look like perfectly symmetrical `vulcanos'. The locations at the same distance from the centre have the same commutation weight. The commutation weight decreases with the distance to the centre and it is minimal for the point on the grid that is furthest away (considering wrap-around) from it. For the standard divided difference choice \R{eq:D2DD} we get the following commutation pattern: divide the grid in four quadrants, NW, SW, NE, SE. The flow corresponding to each variable $i$ in the NW quadrant commutes with the flow of the corresponding variable in the SE quadrant. Similarly, the flow of a variable in the SW quadrant commutes with the flow of the corresponding variable in the NE quadrant (see figures~\ref{fig:comm_DD}-\ref{fig:spycomm_DD} for an illustration in the case $N=8$, i.e.\ $8\times 8$ grid, $N^2 = 64$ variables).
Commutation can be proved, but the proof (technical, in the style of Prop.~\ref{th:prop2}) is not so relevant for the discussion.
Notice also that the closest positions (hence variables) to the centre are always those with the highest non-commutativity. This explains why a plain sequential indexing of the variables, $1,2,3, \ldots, N^2$, will typically display much larger error.

\begin{figure}[t]
	\centering
	\includegraphics*[width=.8\textwidth]{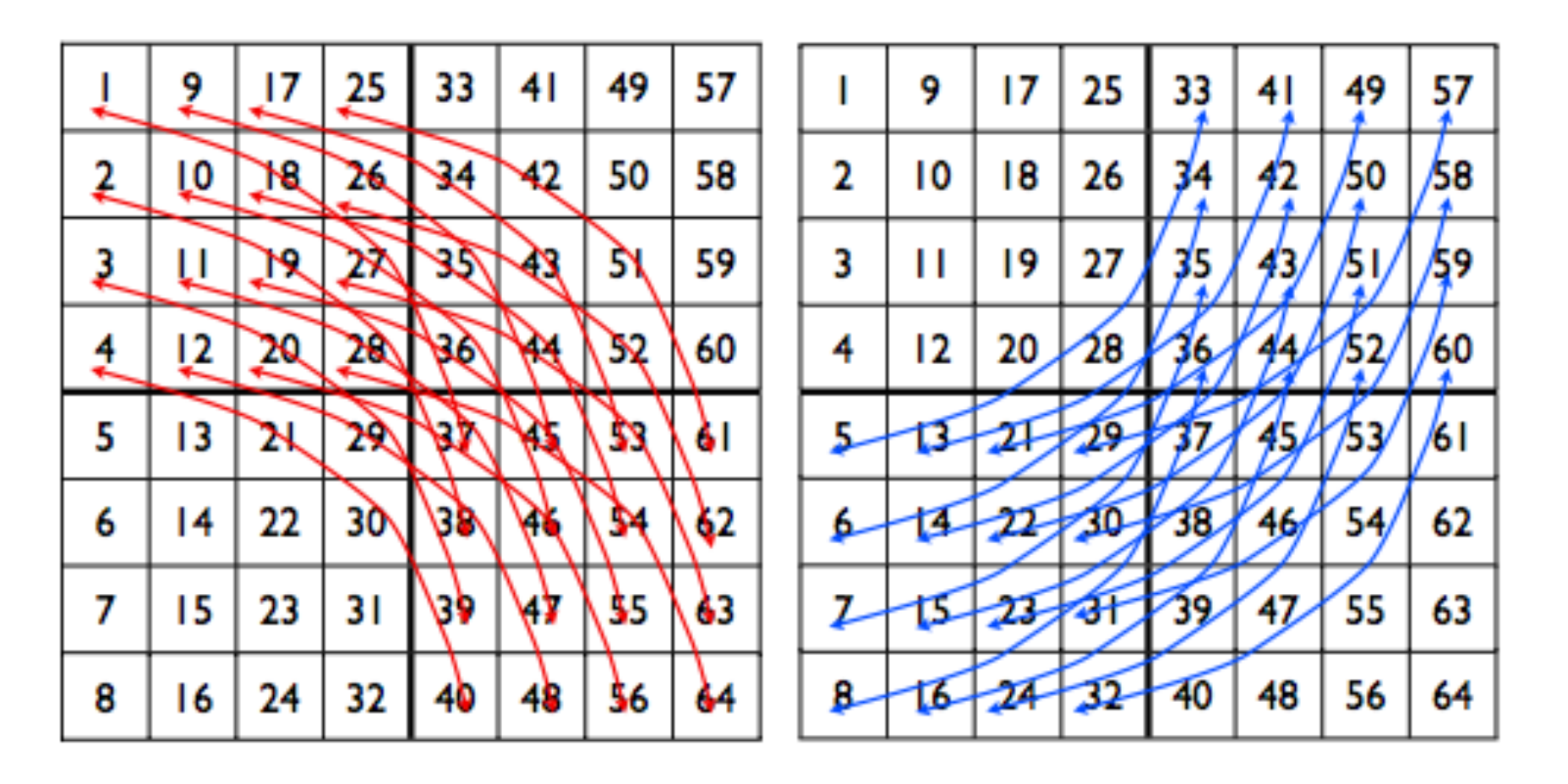}
\caption{The commutation diagram NW-SE (left) and SW-NE (right) for the vector fields $\mathbf{F}_i = f_i \mathbf{e}_i$ for the choice of the standard 5-points discretization of the Laplacian by divided differences.}
\label{fig:comm_DD}
\end{figure}

\begin{figure}[h]
	\centering
	\includegraphics*[width=.4\textwidth]{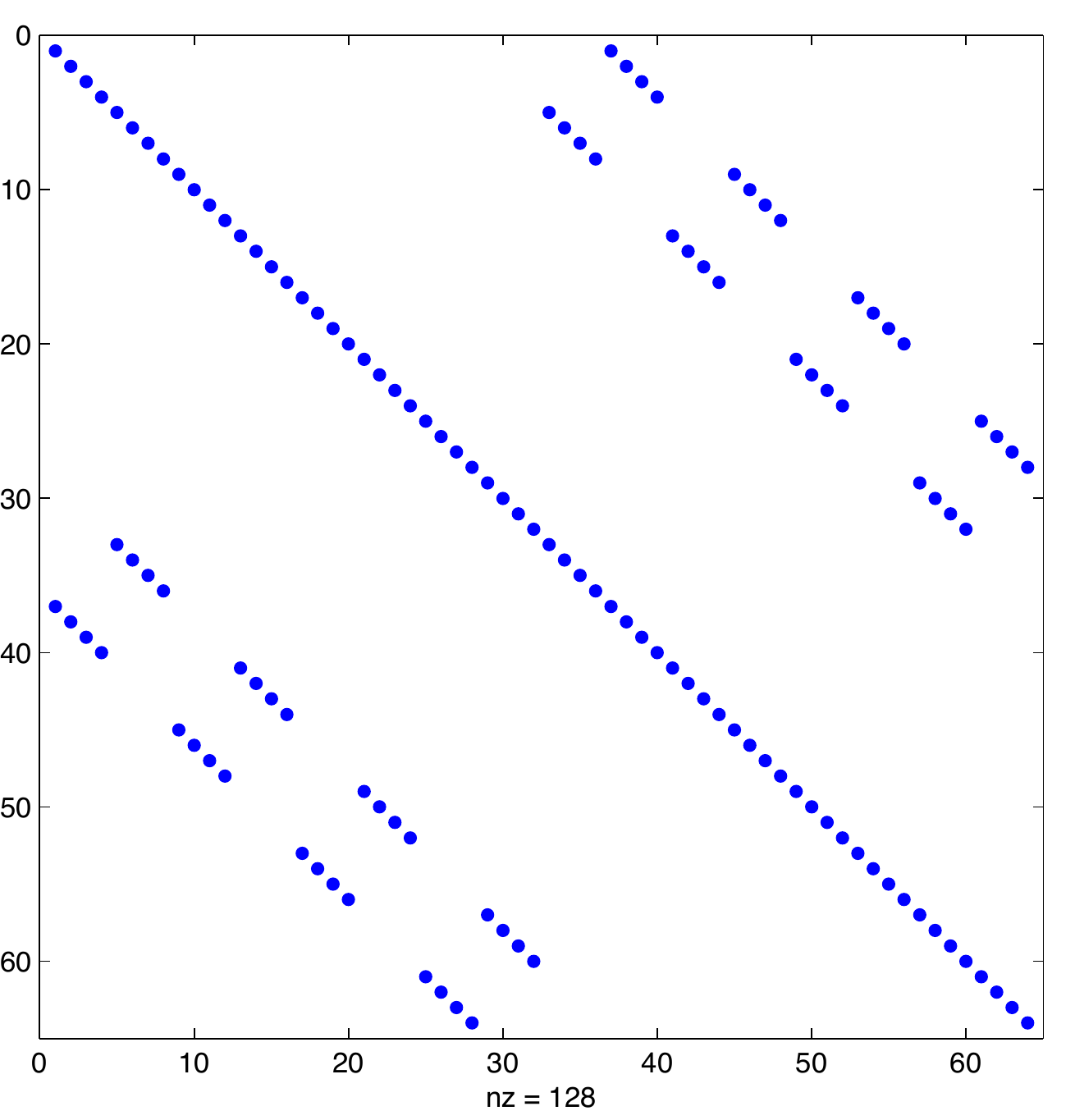} 
	\caption{A matrix showing the commuting vector fields corresponding the standard 5-points discretization of the Laplacian by divided differences. The entry in the position $(i,j)$ is 1 if $f_i$ does not depend on $q_j$, zero otherwise. This graph displays the same information as the  NW-SE and SW-NE diagrams in Figure~\ref{fig:comm_DD}. }
	\label{fig:spycomm_DD}
\end{figure}
%
\begin{figure}[ht]
\centering
\includegraphics*[width=.9\textwidth]{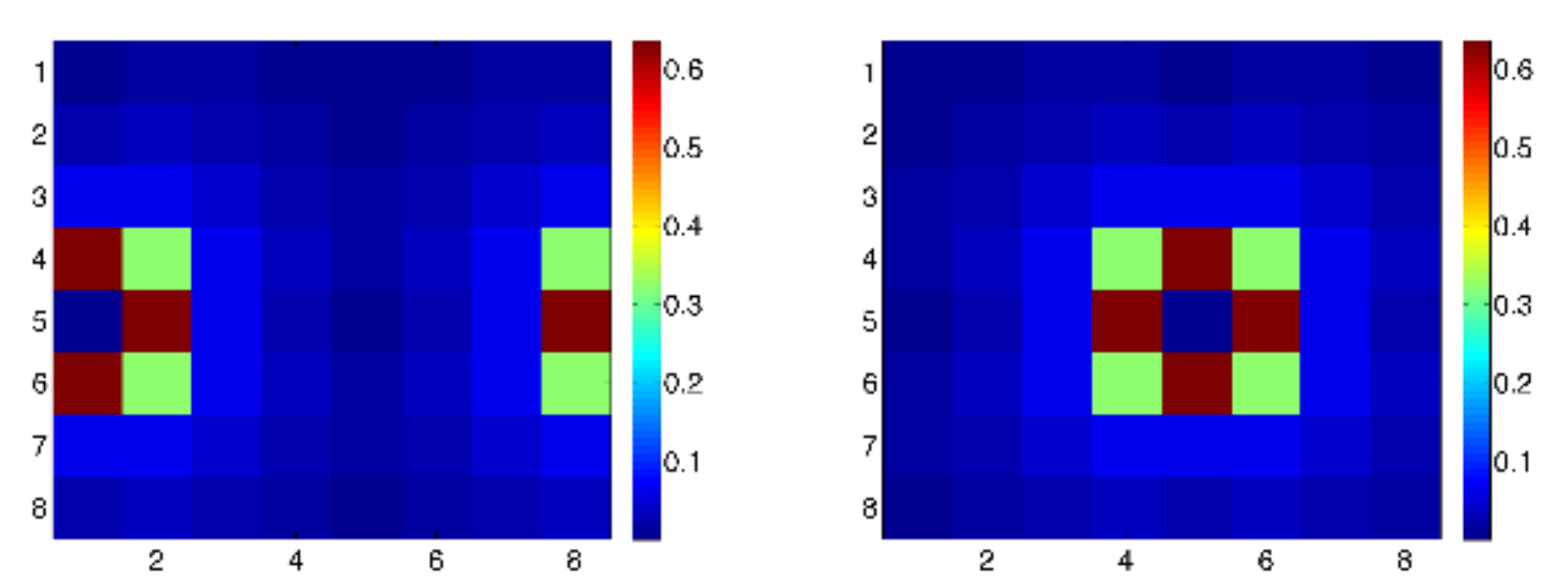}
	\caption{Illustration of the matrices $C^i$ with divided differences as in \R{eq:D2DD}, for $i=5, 37$, for a $8\times 8$ grid. The values are the same, just shifted across the domain, centered on the grid position of the $i$th variable. 
	 $F_5$ commutes with variable $F_{33}$ and $F_{37}$ commutes with $F_1$ (cfr.\ Fig.~\ref{fig:comm_DD}). Notice the wrapping along the boundaries. }
	\label{fig:Ci}
\end{figure}

Once the matrix $C^1$ is computed, each $C^i$ is obtained by shifting on the relevant position in the domain. If we consider the indexing $(1,i)$, the error will be approximately the sum of the two commutation weights, i.e.\ $C_{\mathrm{list}}=C^1+C^i$. Since we aim at minimizing these cumulative sums, we choose the variable $i_2$ to be the position where $C^1$ has a minimum value (different from the centre of the vulcano). This gives us a partial ordering $(i_1, i_2) = (1, i_2)$, with cumulative commutation weight $C_{\mathrm{list}}= C^1 + C^{i_2}$. Then we look at the values of $C_\mathrm{list}$, sorted in increasing order, ignoring the minima corresponding to $1, i_2$, and take the next minimal value. In case of a tie, we obtain a new list,  of candidate indices for which we compute the cumulative weight (starting from the first index in the new list). The new list, sorted according to the cumulative weight, is then added to the previous list, and the corresponding commutation weight matrices are added to $C_\mathrm{list}$. We look at the new minima, get a new list of candidate indices, eventually sort them, and so on, until the final ordering list is obtained. The procedure can be formalized as an algorithm.
\\
\begin{algorithm}
\label{alg:multiN}[MinCom]
Indexing of the vector fields for divided differences \R{eq:D2DD}, by minimal cumulative estimated commutation weight.

\begin{tabbing}
Description: Sorting of the vector fields. Creates a ordered list of indices \\
from $1$ to $N^2$ (number of vector fields). \\
Input: The commutation weight matrix $C^1$, $N$, starting index \texttt{i1}.\\
Output: A list of indices $(1, i_2, \ldots, i_{N^2})$ with a minimal cumulative estimated commutation weight.\\
\\
\% Initialization \\
\texttt{list =[i1]}; \\
\texttt{newlist} = \texttt{list}; \\
$C_\mathrm{list}$ = \texttt{zeros(N,N)};  \% ``cumulative commutation"  weight matrix\\
\\
\% Main loop\\
\textbf{until} \= all variable $\{1,\ldots, N^2\}$ are in \texttt{list} \textbf{do} \\
\>	\textbf{for} \= all \texttt{i} in \texttt{newlist} \\
\>\>		Shift the commutation weight matrices $C^1$ in position \texttt{i} to obtain $C^i$; \\
\>\>          	$C_\mathrm{list} = C_\mathrm{list} + C^i$;\\
\>    	\textbf{end for} \\
\>   \% Find the next most commuting indices \texttt{newlist} (candidate indices)  by getting \\
\>   \% new minimal value: \\
\>    \texttt{[sval, newlist] = sort($C_\mathrm{list}$(:))};  \% \texttt{sval} temporary variable with sorted values \\
\>    Remove from \texttt{newlist} all indices already in \texttt{list};\\
\>    Set \texttt{inewlist} to be the first index in \texttt{newlist};\\
\>    \texttt{minval = sval(inewlist)}; \% new minimum \\
\>    Find all indices in \texttt{newlist} that have value equal to \texttt{minval} and remove rest;\\
\>    Sort the element in \texttt{newlist} so that they locally \texttt{ ...} \\
\>\>	have the least cumulative commutation matrix  with starting  \texttt{ ...}\\
\>\>	index \texttt{inewlist} (recursive call) ; \\ 
\>    \texttt{list = [list, newlist]} ;  \% Concatenate lists \\
\textbf{end do}
\end{tabbing}
\end{algorithm}
The procedure for the algorithm and the intermediate cumulative weight matrices for the  $N=8$ case is illustrated in Figure~\ref{fig:alg1} below.
\begin{figure}[t]
\centering
\includegraphics*[width=.95\textwidth]{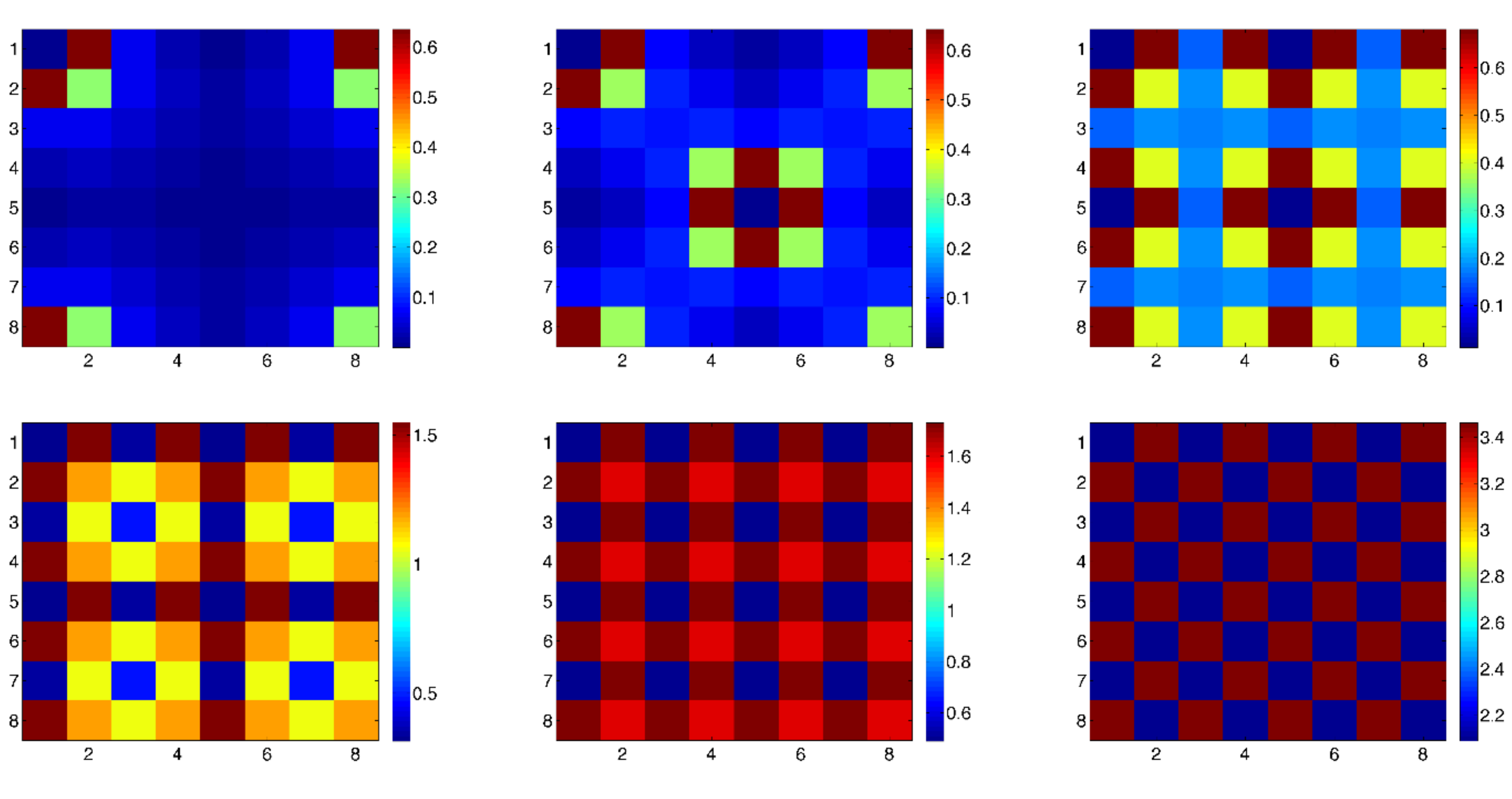}
	\caption{An illustration of Algorithm~\ref{alg:multiN} for indexing of a $8\times 8$ grid, showing the cumulative commutation matrix $C_\mathrm{list}$ with starting index $i_1=1$.  Blue values: commuting/almost commuting. Red values: high dependence on these variables (the commutator is expected to be larger). Top left to bottom right:  At the fist step, newlist=[37] is added (commuting with 1). At the second step, newlist = [5, 33]. As they too commute in between, no sorting is needed. At the third step, newlist = [3, 39,  7,  35,  21,  49,  53,  17]. They have been sorted so that the cumulative commutation matrix of the new list, starting with the $i_\mathrm{newlist}=3$, has the smallest possible value in the corresponding positions. At step 4, newlist =[19, 55,  51, 23], (also sorted). At step 5, newlist = [10,    46,    42,    14,    44,    16,    48,    12,    28,    64,    60,    32,    30,    58,    26,    62] (sorted) and finally at step 6, newlist =[2,    38,    34,     6,     8,    36,    40,     4,    56,    20,    52,    24,    22,    50,    18,    54,    29,  57,    61,    25,     9,    45,    13,    41,    47,    11,    43,    15,    27,    63,    31,    59] (sorted).}
	\label{fig:alg1}
\end{figure}
Interestingly enought, the results of this algorithm returns a grid that reminds of multigrid approaches, which might give an intuition of why this ordering should give better results. But there are differences: in the multigrid approach, the computed values are not  variable values, rather, they are transferred to the relevant variables at the higher resolution by some interpolation operator, until the finest level. In this approach, we do compute the actual variable values and no interpolation is taking place: all the computations are done at the same grid level.

\section{Numerical results}
\label{sec:numerical}
We perform numerical simulations with the $J_\mathcal{EZ}$ vector field on the test problem of \cite{abramov2003statistically,MR2442396}. The topography function is 
\begin{equation}
	h(x,y) = 0.2 \cos x + 0.4 \cos 2x.
	\label{eq:topography}
\end{equation}
The coefficients of our VP implementation based on splitting methods are given in  \R{eq:aikl_EZ}.

We test different sizes of the grid. For each grid size, we use the same initial condition, computed numerically using an optimization technique and random initial condition, with the constraints  $E= 7$, $Z =20$,  and first and third moment equal to zero as in \cite{MR2442396}. 

We estimate the mean fields of the numerical simulations and compare with the mean field predictions of the $\mathcal{EZ}$ semi-discretization. 

The numerical experiments consist in estimating the average fiels $\langle \bm{\psi} \rangle$ and the correlation $\mu$ between the average vorticity $\langle \mathbf{q} \rangle$ and $\langle \bm{\psi} \rangle$. In facts, under the assumptions of energy and enstrophy conservation, and conservation of linear momentum (total vorticity), the statistical theory asserts that
\begin{equation}
	\langle \mathbf{q}\rangle =  \mu \langle \bm{\psi} \rangle,
	\label{eq:qmupsi}
\end{equation}
and, further, that $\langle \bm{\psi} \rangle$ is a function of $x$ only (as such is $h(x,y)$). 
The least squares solution for $\mu$ gives the estimate
\begin{displaymath}
	\mu = \frac{\langle\bm{\psi} \rangle^T \langle \bm{q} \rangle}{\langle\bm{\psi} \rangle^T\langle\bm{\psi}\rangle}.
\end{displaymath}
For the values of energy and enstrophy above, $E= 7$, $Z =20$, and zero total vorticity, the predicted value of $\mu$ was estimated by \cite{MR2442396} to be $\mu \approx -0.730$ for $N=22$. We have computed the predictions for different values of $N$ below.
The  algorithm for the prediction, described in \cite{MR2442396} and adapted from the spectral analysis in \cite{carnevale87nsa}, is reported here for convenience. The ensemble averages for energy and estrophy are split into a mean part and a fluctuation, $\langle E_N \rangle = E_N(\langle \mathbf{q} \rangle ) + E'_N$, $\langle Z_N \rangle = Z_N(\langle \mathbf{q} \rangle ) + Z'_N$. Then using a spectral truncation, one has
\begin{eqnarray}
	E_N (\langle \mathbf{q}\rangle) = \frac12\sum_{k,l = -N/2+1}^{N/2} \frac{(k^2+l^2)|\hat{h}_{k,l}|^2}{(\mu+k^2+l^2)^2} \delta^2, && E'_N = \frac1{2\alpha} \sum_{k,l = -N/2+1}^{N/2}\frac{1}{\mu+k^2+l^2}, \label{eq:EM} \\
	Z_N (\langle \mathbf{q}\rangle) = \frac12\sum_{k,l = -N/2+1}^{N/2} \frac{\mu^2|\hat{h}_{k,l}|^2}{(\mu+k^2+l^2)^2} \delta^2, && Z'_N = \frac1{2\alpha} \sum_{k,l = -N/2+1}^{N/2}\frac{k^2 +l^2}{\mu+k^2+l^2}. \label{eq:ZM}
\end{eqnarray}
Starting with suitable guesses for $\mu, \alpha$, the actual values are computed by iteration (nonlinear least squares) using \R{eq:EM}--\R{eq:ZM} in conjunction with the assumption that $E^*_N = E = 7$ and $Z^*_N = Z = 20$, see Table~\ref{tab:predictedmu}.
\begin{table}
	\centering
	\begin{tabular}{l || r | r | r | r | r| r| r|} \hline
	$N$  & $6$ & 		$8$		 & $10$		& $16$		& $22$ 		& $32$	 	& $64$\\ \hline
	predicted $\mu$  & $-0.3995$& $-0.5646$ & $-0.6402$	& $-0.7106$	& $-0.7298 $ 	& $-0.7409$	& $-0.7487$ \\ \hline
	\end{tabular}
	\label{tab:predictedmu}
	\caption{Predicted values of $\mu$ as a function of $N$ for fixed $E=7$, $Z=20$ and topography $h(x,y)$ as in \R{eq:topography}.} 
		\label{tab:predictedmu}
\end{table}
%

To compute our averages, we perform a long time numerical simulation. Assuming ergodicity (because of  volume preservation of the numerical method), we compute the expectations relative to the microcanonical ensamble. On the other hand, the theory predictions determine the expectations for the canonical ensamble. Only in the limit, $N\to \infty$, the averages (hence the expectations) coincide. 

The start for averaging time is $T_0 = 10^3$, to allow decorrelation of the initial conditions. The averages are then computed at $T=10^4, 10^5, 10^6$, according to the formula
\begin{displaymath}
	\langle \psi \rangle = \frac{1}{T-T_0} \int_{T_0}^T \psi \D\, t.
\end{displaymath}
Numerically, denote by $\bm{\psi}_k$ the numerical approximation at time $t= k\tau$.  Assume $T=K\tau, T_0 = K_0\tau$. Then,
\begin{displaymath}
	\langle \bm{\psi} \rangle = \frac{1}{K-K_0} \sum_{k=K_0+1}^K \bm{\psi}_k.
\end{displaymath}
A similar formula is used for the $q$ variable.

In the experiments, we will consider the following types of  (forward) ordering of the variables:\footnote{The actual indexing is a \emph{symmetric} version of the above, i.e.\ forward and backward, in order to obtain an order 2, symmetric, splitting method.}
\begin{description}
\item[Plain] Denotes the standard sequential ordering $1,2,3,\ldots$
\item[BW] Checkerboard approach. For some types of problems, this ordering might give separation of variables and better error propagation. In this problem, the BW approach does not give variable separation, due to the ``mixing'' effect of $\Delta_N^{+}$.
\item[MinCom] The  indexing obtained by the Algorithm~\ref{alg:multiN} computing the minimal cumulative estimated commutation weight.
\end{description}

\subsection{Experiments with $8\times 8$ grid}
In the case of the `Plain' symmetric splitting, we observe a linear decrease of the relative energy over long integration time. At $T=10^6$, the relative energy error is of the order of $10^{-2}$, and this is reflected in the estimate of the $\mu$, that also increases at the same pace. The relative enstrophy error grows at a smaller rate, but still has a linear behavior. The absolute total vorticity (first moment) error also grows, at a much faster pace. 

As noted in \cite{MR2442396}, a nonzero total vorticity $\sum_{i,j} q_{i,j}$ will give a constant displacement in the relation \R{eq:qmupsi}. The total vorticity is a \emph{linear} integral, and is automatically preserved by any linear integration methods. However, this is not the case of splitting methods, unless the integral is preserved by any of the split fields.

We also plot the absolute error in the third moment. This is not as well preserved, but bounded. The result is not unreasonable as the semi-discrete equations \R{eq:JEZd}  do not preserve the third moment, while first moment, energy and enstrophy are preserved.

\begin{figure}
	\centering
\includegraphics*[width=.9\textwidth]{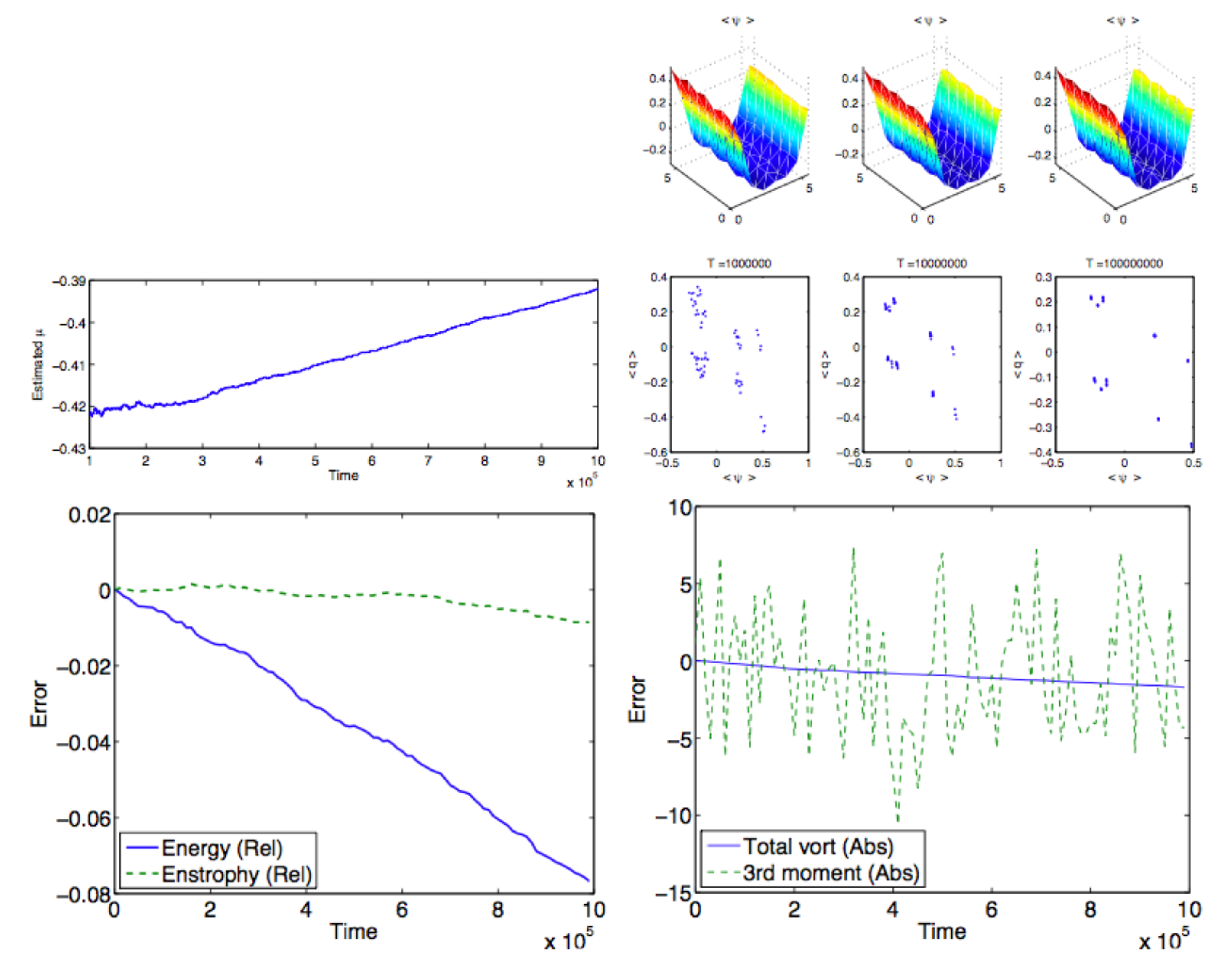}
	\caption{$8\times 8$ grid, with `Plain' ordering of the indices, $1,2,\ldots, N^2, N^2-1,\ldots, 2,1$. Top left: Estimated $\mu$. Top right: Average field, and linear fitting plots, $\langle q \rangle$ vs. $\langle \psi \rangle$. Bottom left: relative error in the energy (solid line) and enstrophy (dashed line). Bottom right: absolute errors in the total vorticity (solid line), which is also the first moment of the system, and third moment (dashed line).}
	\label{fig:8x8_Plain}
\end{figure}

\begin{figure}
	\centering
\includegraphics*[width=.9\textwidth]{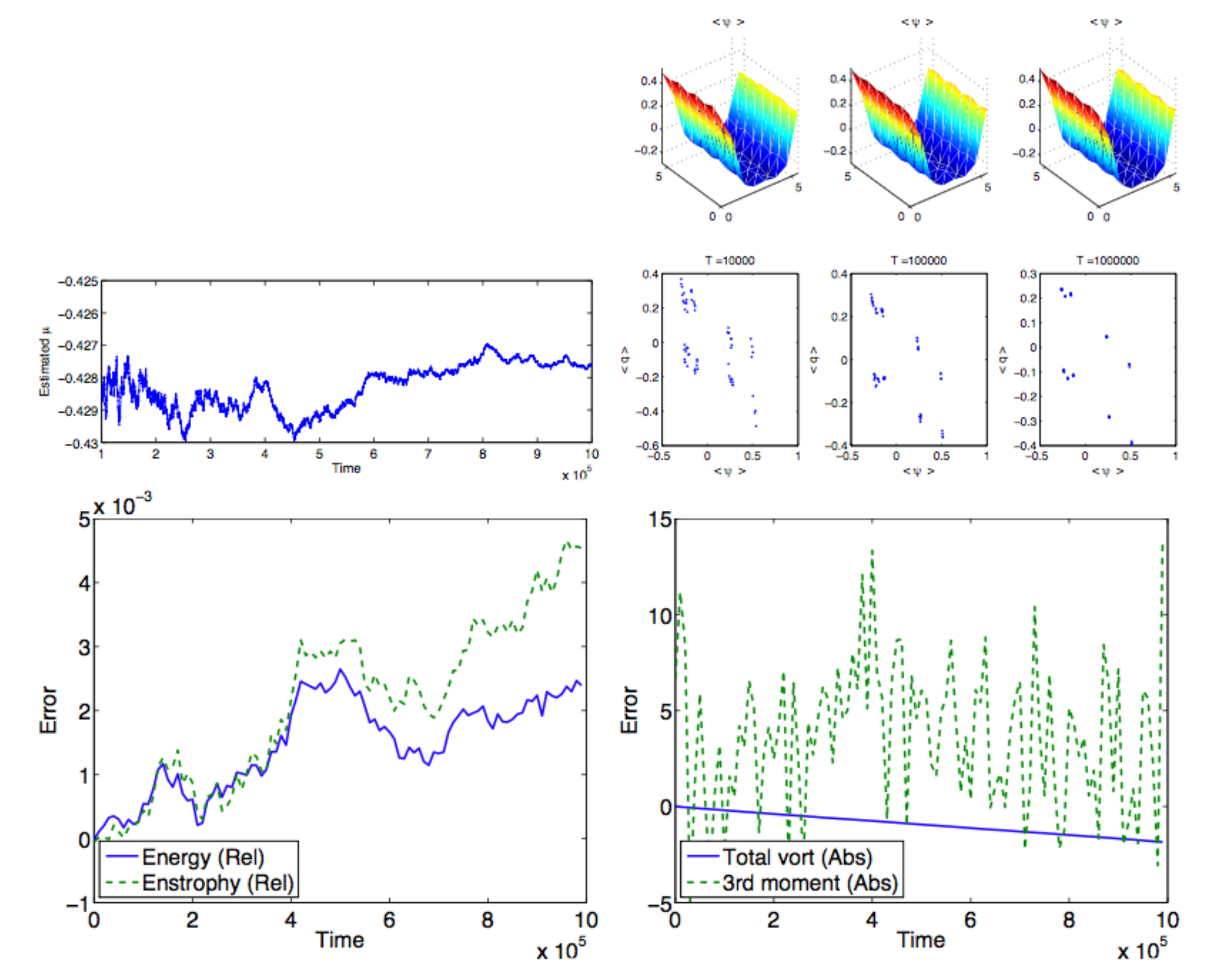}
	\caption{$8\times 8$ grid, with `BW' ordering of the indices. Top left: Estimated $\mu$. Top right: Average field, and linear fitting plots, $\langle q \rangle$ vs. $\langle \psi \rangle$. Bottom left: relative error in the energy (solid line) and enstrophy (dashed line). Bottom right: absolute errors in the total vorticity (solid line), which is also the first moment of the system, and third moment (dashed line).}
	\label{fig:8x8_BW}
\end{figure}

The checkerboard `BW' splitting is shown in Figure~\ref{fig:8x8_BW}. We observe that the energy and enstrophy are already better preserved than the `Plain' case. The relative error in the energy and enstrophy is still of the order of $10^{-3}$ even after $10^7$ integration steps. However, the total vorticity still shows a noticeable error and decreases to $\approx -2$ at the end of the integration. A comparison of the first moment for the `BW' splitting and the `MinCom' approach is shown in Figure~\ref{fig:8x8_BW_MultiN}.
The absolute error in the third moment is comparable to the case of the `Plain' discretization and appears to be bounded.

In the case of the `MinCom' symmetric splitting, we observe a much better and slower error propagation over the long time integration. The relative error in the energy and enstrophy is still of the order of $10^{-3}$ even after $10^7$ integration steps. This reflects on the  estimate of the $\mu$, that does not show a liner increase, as in the `Plain' case.
The absolute total vorticity (first moment) error also is much better conserved.
The absolute error in the third moment is comparable to the case of the 'Plain' discretization and appears to be bounded. 

\begin{figure}
	\centering
\includegraphics*[width=.90\textwidth]{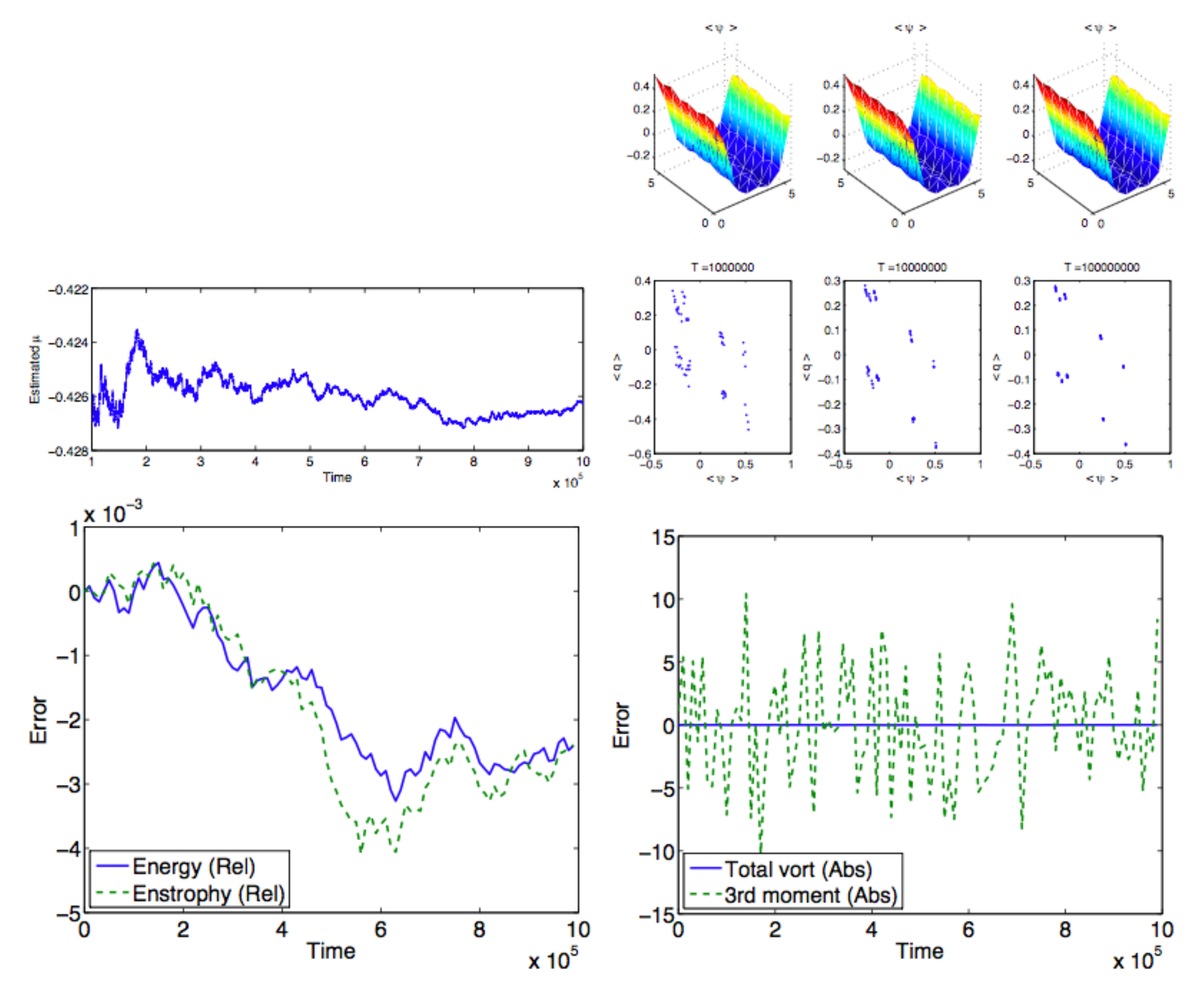}
	\caption{$8\times 8$ grid, with `MinCom' ordering of the indices, as described in Algorithm~\ref{alg:multiN}. Top left: Estimated $\mu$. Top right: Average field, and linear fitting plots, $\langle q \rangle$ vs. $\langle \psi \rangle$. Bottom left: relative error in the energy (solid line) and enstrophy (dashed line). Bottom right: absolute errors in the total vorticity (solid line), which is also the first moment of the system, and third moment (dashed line).}
	\label{fig:8x8_Plain}
\end{figure}

\begin{figure}
\includegraphics*[width=.9\textwidth]{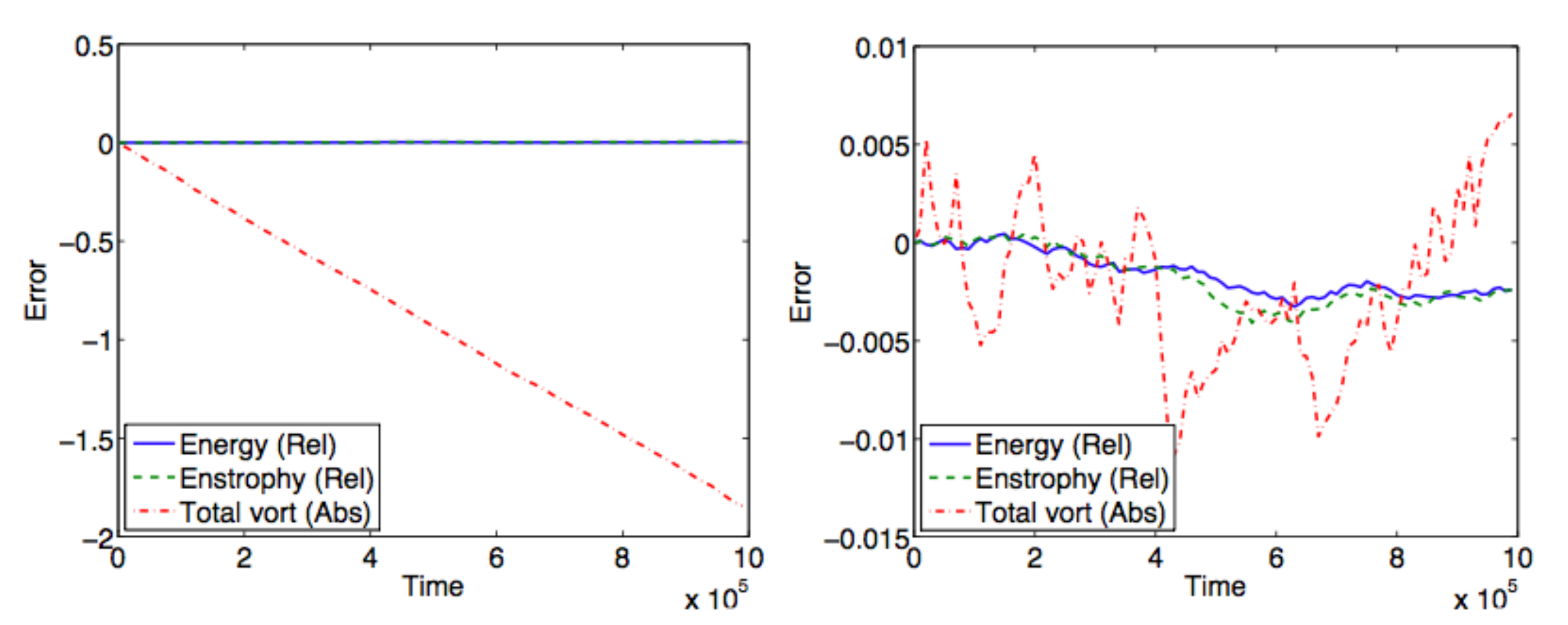}
	\caption{$8\times 8$ grid. A comparison on the conserved quantities, energy, enstrophy and total vorticity of the $\mathcal{EZ}$ semi-discretized flow \R{eq:JEZd} for the `BW' (left) and `MinCom' (right) indexing.}
	\label{fig:8x8_BW_MultiN}
\end{figure}

\begin{table}
	\centering
	\begin{tabular}{l || r | r | r  |}
	$8\times 8$&	`Plain' & `BW' & `MinCom'\\ \hline
	$T=10^4$ & $-0.4283$ &	$-0.4334$	&  $-0.4344$	 \\
	$T=10^5$ & $-0.4225$ &	$-0.4294$	& $-0.4267$	\\
	$T=10^6$ & $-0.3919$ &	$-0.4275$	& $-0.4262$	\\ \hline
	\end{tabular}
	\caption{Estimated values of $\mu$ for the different indexing, $8\times 8$ grid. Initial averaging time: $T_0= 10^3$. Stepsize of integration $\tau=0.1$.}
\end{table}

\subsection{Experiments with $16\times 16$ grid}
We repeated the same experiment on a finer grid, $N=16$. Again, we tested a `Plain' indexing, a `BW' and a `MinCom' indexing. The initial solution was the same for the three trials. Once again, the BW method failed to provide any reasonable result. The splitting using the `Plain' indexing provided results up to $t=8\times 10^5$ (blow-up time), but the solution exploded thereafter. The estimated $\mu$ shows a clear linear trend until the blow-up time. The linear trend is likely a consequence of a linear increase in energy and enstrophy by the numerical simulation.

Finally, Figure~\ref{fig:16x16_MultiN} shows the results of the simulation using the MinCom indexing approach in Algorithm~\ref{alg:multiN}. The relative errors in the energy and enstrophy, as well as linear momentum, are very well preserved, and this reflects in the estimation of $\mu$. See Table~\ref{tab:16x16} for the actual figures.

\begin{figure}
	\centering
\includegraphics*[width=.9\textwidth]{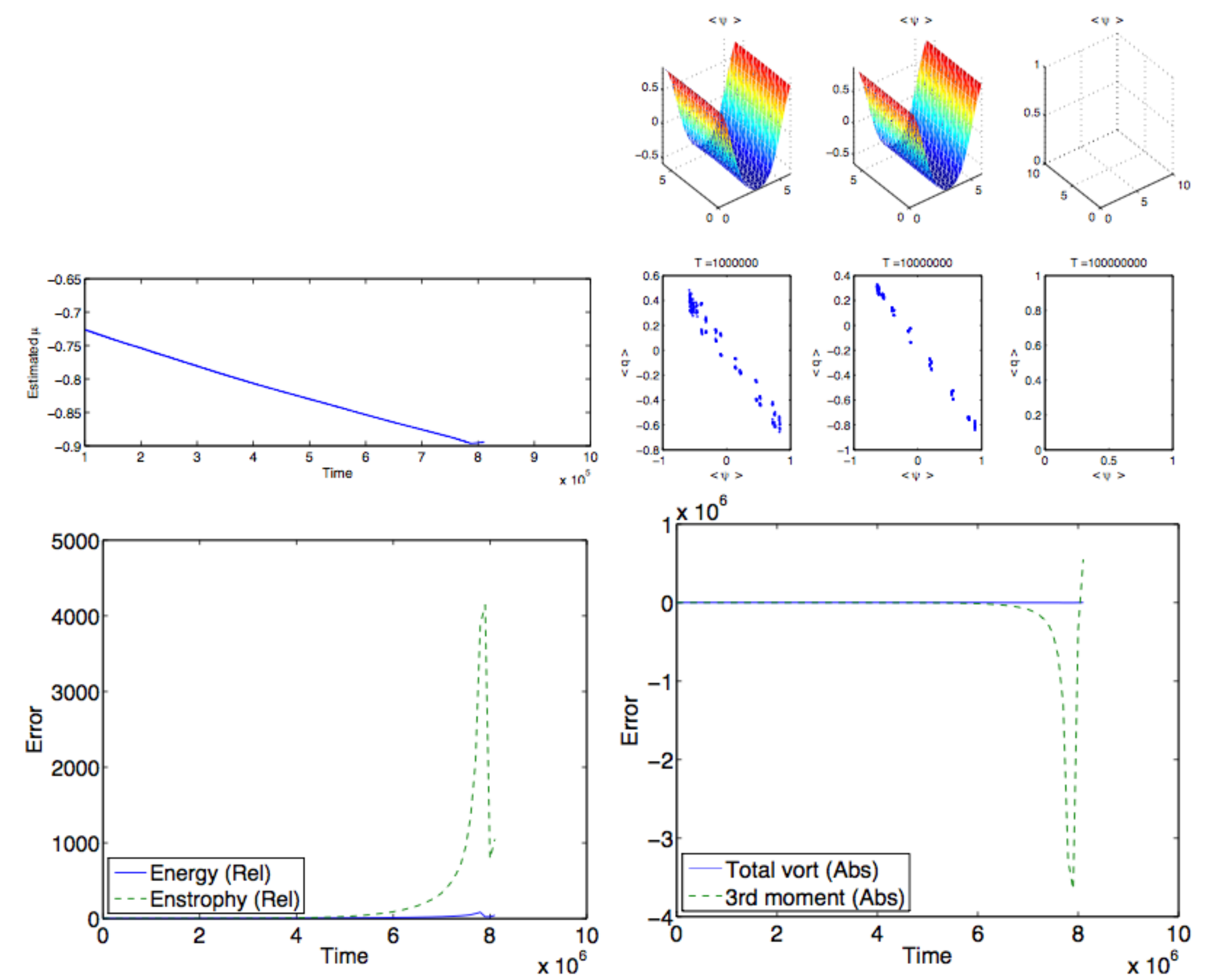}
	\caption{$16\times 16$ grid, with `Plain' ordering of the indices. Top left: Estimated $\mu$. Top right: Average field, and linear fitting plots, $\langle q \rangle$ vs. $\langle \psi \rangle$. Bottom left: relative error in the energy (solid line) and enstrophy (dashed line). Bottom right: absolute errors in the total vorticity (solid line), which is also the first moment of the system, and third moment (dashed line).}
	\label{fig:16x16_Plain}
\end{figure}

\begin{figure}
	\centering
\includegraphics*[width=.9\textwidth]{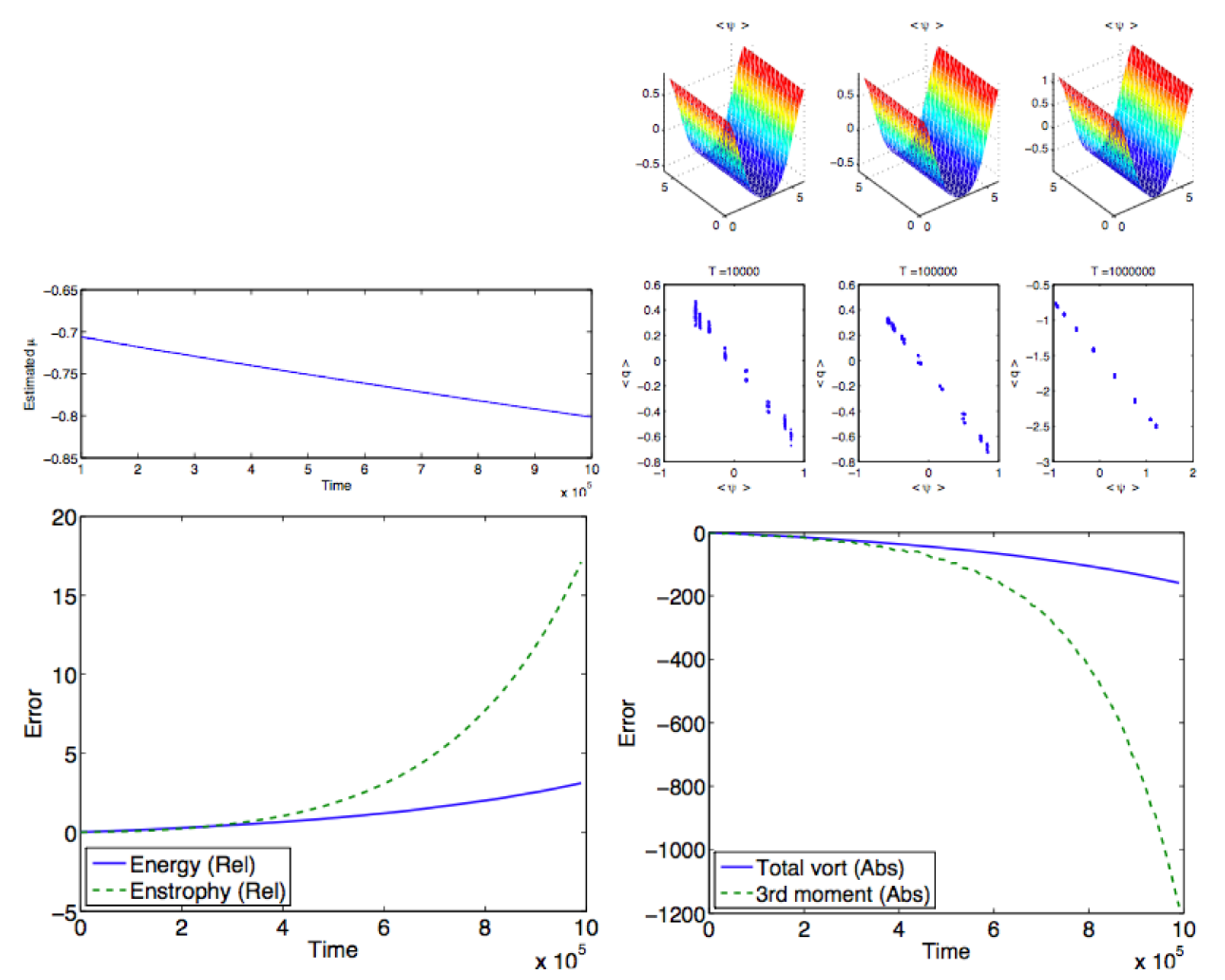}
	\caption{$16\times 16$ grid, with `BW' ordering of the indices. Top left: Estimated $\mu$. Top right: Average field, and linear fitting plots, $\langle q \rangle$ vs. $\langle \psi \rangle$. Bottom left: relative error in the energy (solid line) and enstrophy (dashed line). Bottom right: absolute errors in the total vorticity (solid line), which is also the first moment of the system, and third moment (dashed line).}
	\label{fig:16x16_Plain}
\end{figure}

\begin{figure}
	\centering
\includegraphics*[width=.9\textwidth]{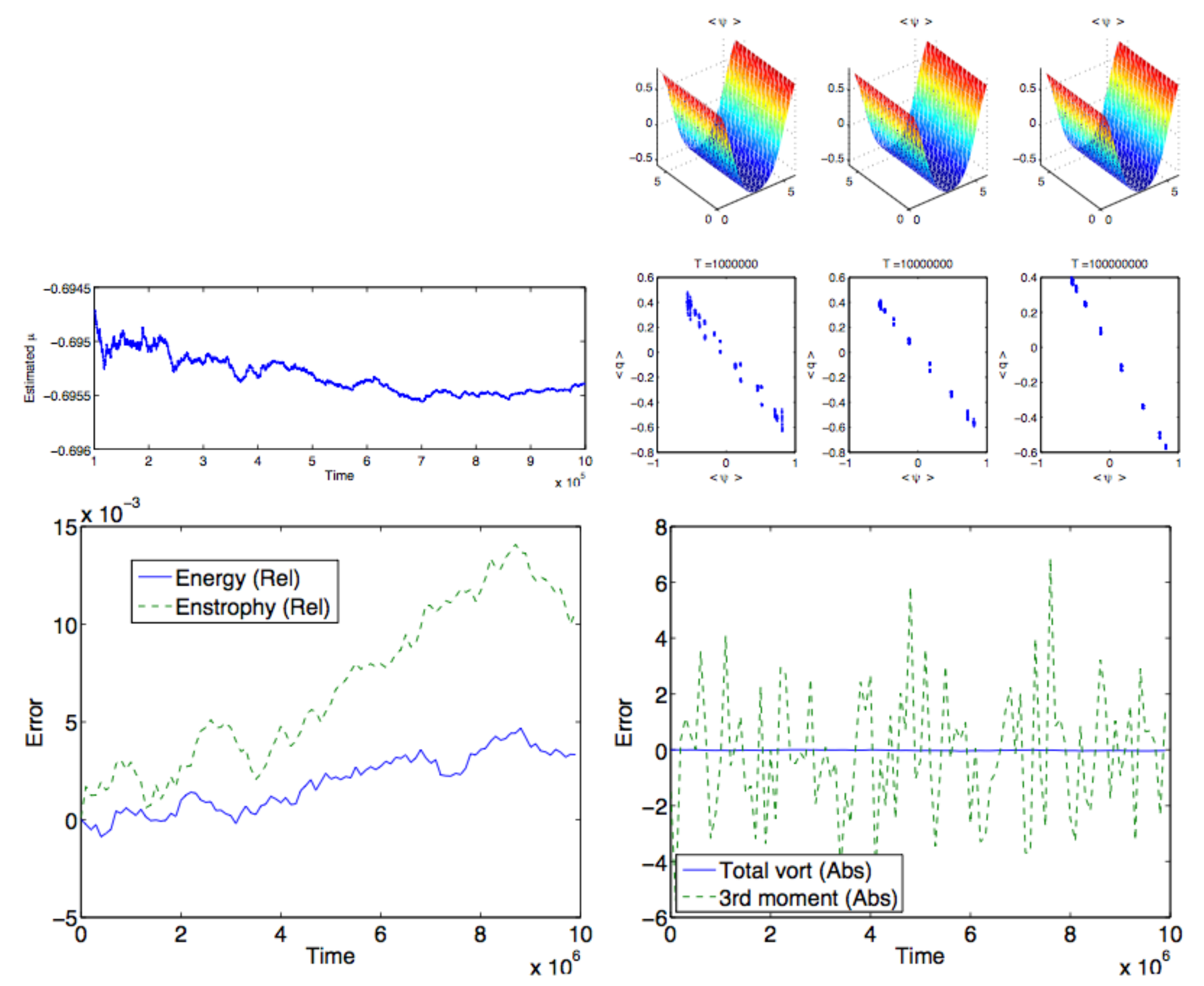}
	\caption{$16\times 16$ grid, with `MinCom' ordering of the indices, as described in Algorithm~\ref{alg:multiN}. Top left: Estimated $\mu$. Top right: Average field, and linear fitting plots, $\langle q \rangle$ vs. $\langle \psi \rangle$. Bottom left: relative error in the energy (solid line) and enstrophy (dashed line). Bottom right: absolute errors in the total vorticity (solid line), which is also the first moment of the system, and third moment (dashed line).}
	\label{fig:16x16_MultiN}
\end{figure}

\begin{table}
	\centering
	\begin{tabular}{l || r | r | r  |}
	$16\times 16$&`Plain' & `BW' & `MinCom'\\ \hline
	$T=10^4$ &  $-0.7023 $ & $-0.6951$ &$-0.6939$ 	 \\
	$T=10^5$ &  $-0.7261 $ & $-0.7061$ &$-0.6947 $	\\
	$T=10^6$ &    NaN  & $ -0.8013$ &$   -0.6954$	\\ \hline
	\end{tabular}
\caption{Estimated values of $\mu$ for the different indexing, $16\times 16$ grid. Initial averaging time: $T_0= 10^3$. Stepsize of integration $\tau=0.1$.}
	\label{tab:16x16}
\end{table}

\begin{figure}
\includegraphics*[width=.9\textwidth]{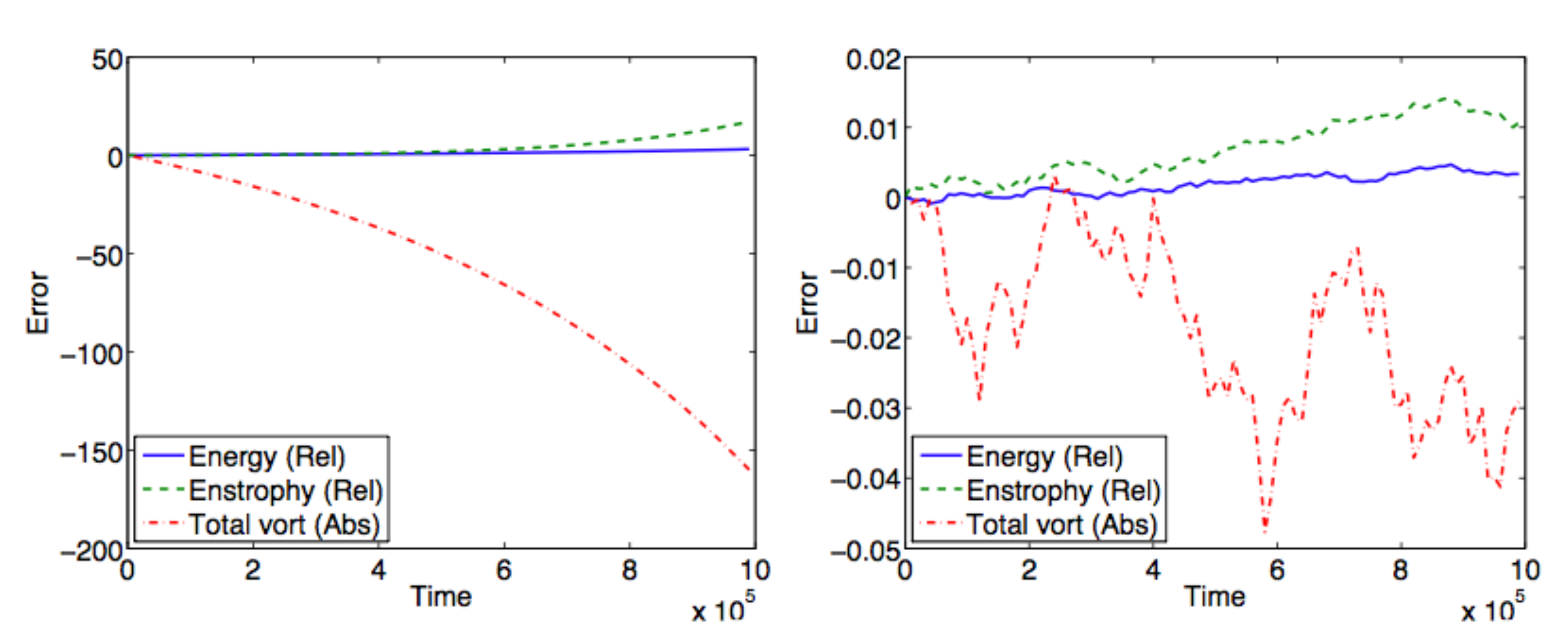}
	\caption{Energy, enstrophy and total vorticity for the `BW' (left) and `MinCom' (right) indexing for the $16\times16$ grid.}
	\label{fig:16x16_BW_MultiN}
\end{figure}

\subsection{Experiment with $22\times 22$ grid}
The same experiment is repeated on a finer grid, $N=22$. We tested only the `MinCom' indexing, given the deterioration of the quality of the solution for the two other orderings. The initial solution was randomly generated. For this number of mesh points, the energy and the enstrophy grow also for this indexing of the variables. The estimated $\mu$ shows a clear linear trend, The linear trend is likely a consequence of a linear increase in energy and enstrophy by the numerical simulation.
 See Table~\ref{tab:22x22} for the actual figures.

\begin{figure}
	\centering
\includegraphics*[width=.9\textwidth]{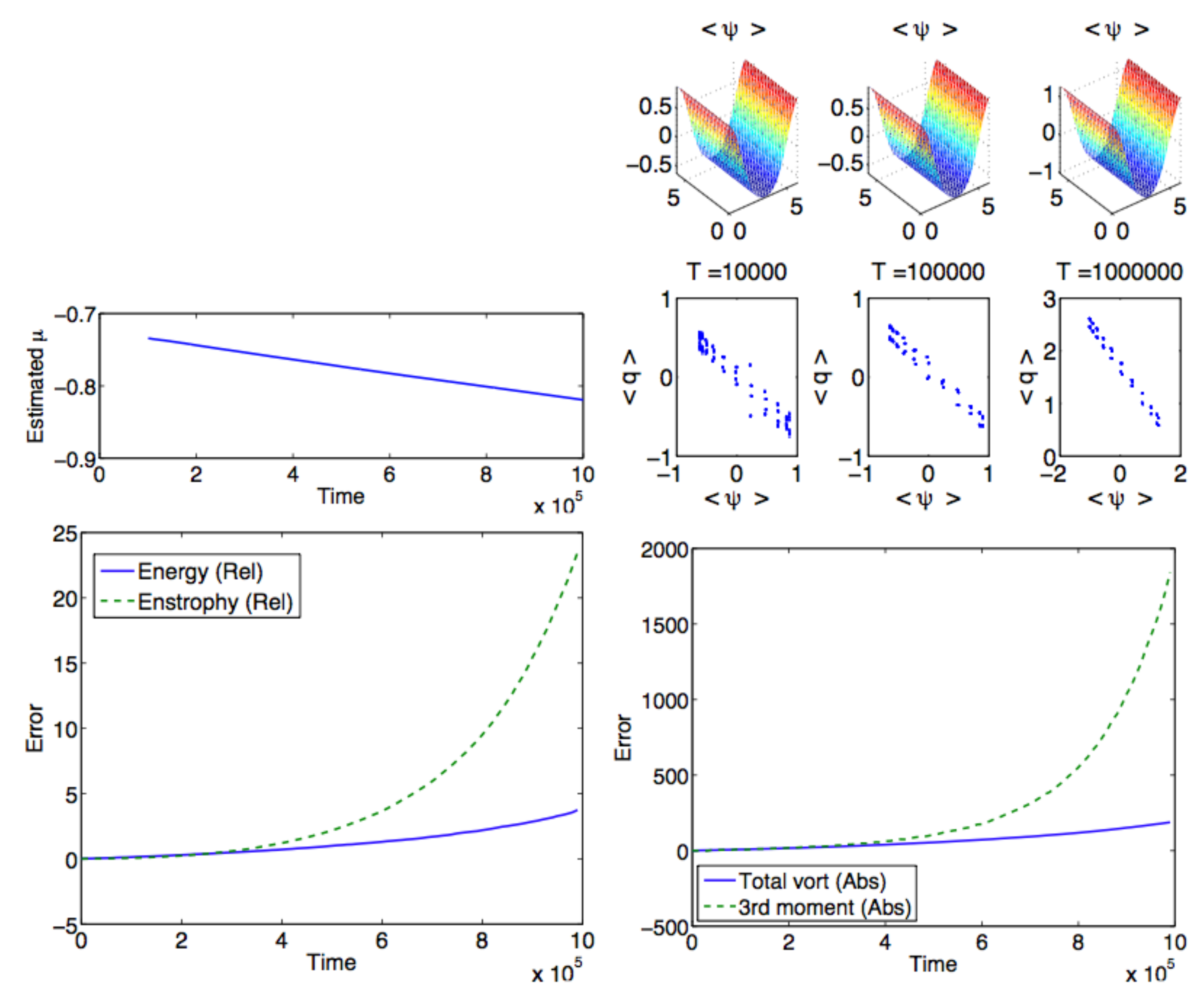}
	\caption{$22\times 22$ grid, with `MinCom' ordering of the indices, as described in Algorithm~\ref{alg:multiN}. Top left: Estimated $\mu$. Top right: Average field, and linear fitting plots, $\langle q \rangle$ vs. $\langle \psi \rangle$. Bottom left: relative error in the energy (solid line) and enstrophy (dashed line). Bottom right: absolute errors in the total vorticity (solid line), which is also the first moment of the system, and third moment (dashed line).}
	\label{fig:22x22_MultiN}
\end{figure}

\begin{table}
	\centering
	\begin{tabular}{l || r | r | r | r |}
	$22\times 22$&`Plain' & `BW' & `MinCom'      & `MinCom4'\\ \hline
	$T=10^4$ &	--	  &	--	& $-0.7248$  &	$-0.7245$    \\
	$T=10^5$ & 	--	   &	--	& $ -0.7342$ &	$ -0.7234$ \\
	$T=10^6$ & 	--	   &	--	& $ -0.8194$ & $-0.7201$	\\ \hline
	\end{tabular}
\caption{Estimated values of $\mu$ for the different indexing, $22\times 22$ grid. Initial averaging time: $T_0= 10^3$. Stepsize of integration $\tau=0.1$. The last column is obtained with an order 4 implementation of the method, using Yoshida's technique in \S\ref{sec:yoshida}.}
\label{tab:22x22}
\end{table}

\subsection{Simulations using higher order methods}
\label{sec:yoshida}
Clearly, the $22\times22$ simulation, even with the optimized ordering of the splitting terms, does not show satisfactory results, as there is a clear linear drift in the estimation of $\mu$. This seems to be apparently due to worse accumulation of error, that eventually displays an exponential growth. This is disappointing, as dimensionality seems to play a role in how long the method is close to a ``nearby problem". If the energy and the enstrophy (and linear momentum) grow too much, it is clear that the computed statistical averages will be affected. 

To enforce better preservation of the conserved quantities, we test the approach using an order four implementation of the volume preserving splitting method using Yoshida's technique \cite{yoshida90coh}: given a self-adjoint method $S^2_\tau$ of order two, it is possible to increase the order from two to four using the composition 
\begin{displaymath}
	S^4_\tau = S^2_{\alpha \tau} \circ S^2_{\beta \tau} \circ S^2_{\alpha \tau}
\end{displaymath}
where $\alpha, \beta$ obey the conditions
\begin{displaymath}
	2 \alpha + \beta = 1, \qquad 2\alpha^3 + \beta^3 = 0.
\end{displaymath}
There is a unique real solution to the above equations, given by $\alpha = 1/(2-2^{1/3})$ and $\beta = -2^{1/3}/(2-2^{1/3})$. By a similar token, it is possible to increase the order from four to eight and further, though it is known that this approach might lead to higher leading error term and compositions with fewer terms and smaller leading error can be constructed, see for instance \cite{blanes2008splitting}.

The results of the order four implementation of the volume preserving splitting method for the $22\times22$ example with the same initial condition as the order two method, are shown in Figure~\ref{fig:22x22_MultiN4}, and the computed values for $\mu$ are tabulated in Table~\ref{tab:22x22}. The fourth order  method has a better conservation of all the integrals of the $\mathcal{EZ}$ semi-discretization, and in addition it displays a good preservation of the third order momentum. However, there is still a mild linear tendency in the estimated $\mu$.

\begin{figure}
	\centering
\includegraphics*[width=.9\textwidth]{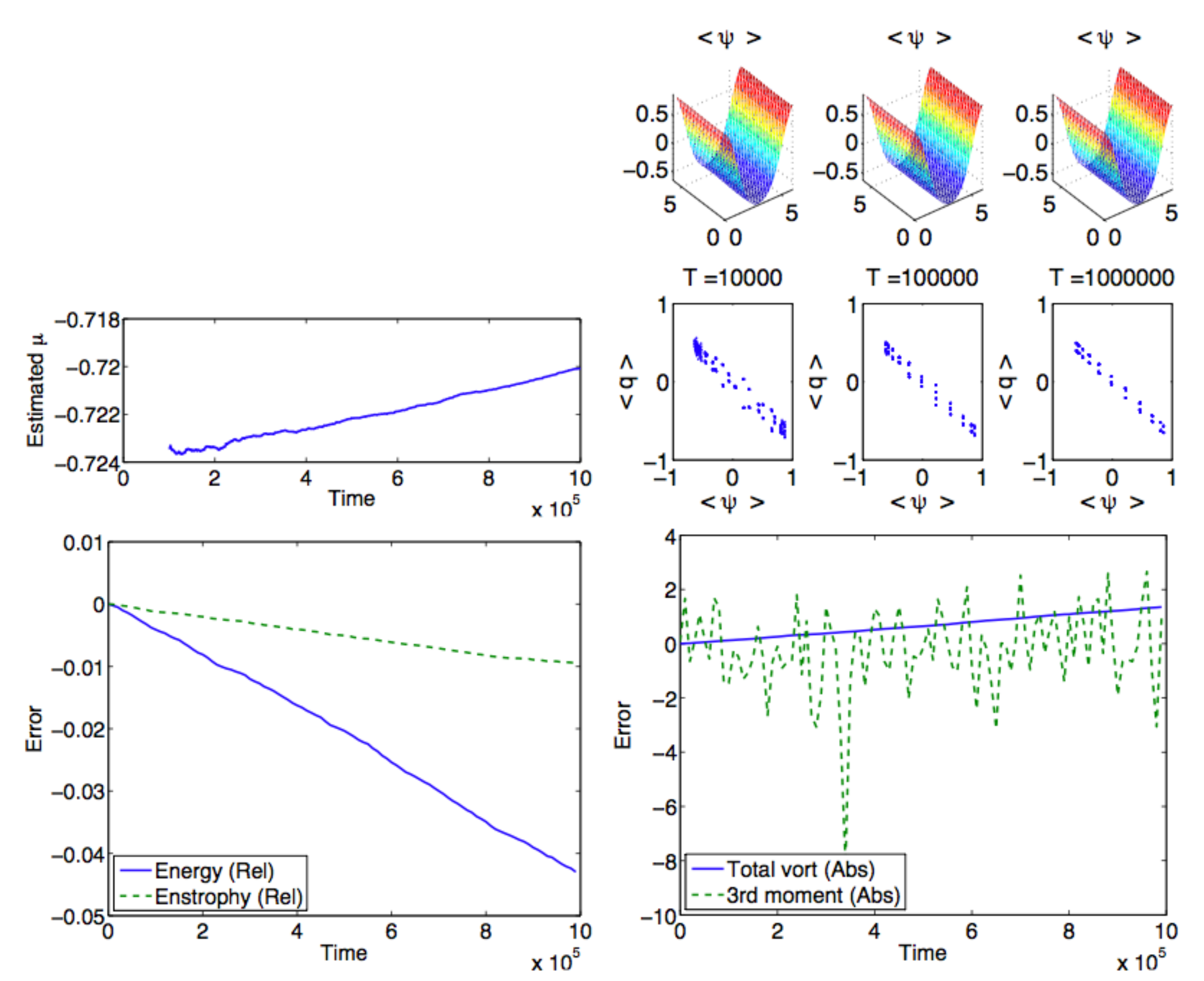}
	\caption{$22\times 22$ grid, order 4 implementation with `MinCom' ordering of the indices, using Yoshida's technique. Bottom left: relative error in the energy (solid line) and enstrophy (dashed line). Bottom right: absolute errors in the total vorticity (solid line), which is also the first moment of the system, and third moment (dashed line).}
	\label{fig:22x22_MultiN4}
\end{figure}
\begin{figure}
	\centering
	\includegraphics*[width=.5\textwidth]{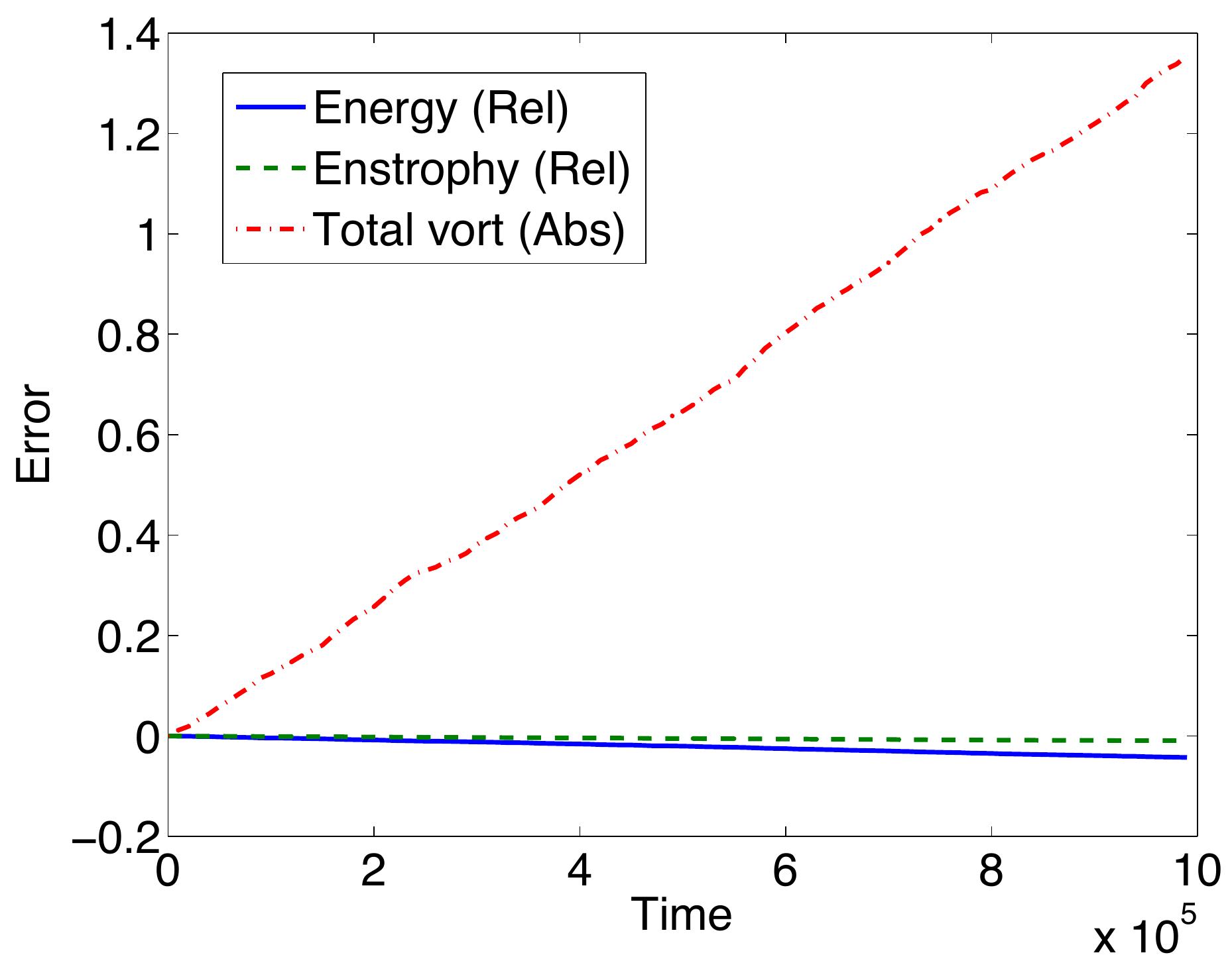}
	\caption{$22\times 22$ grid, order 4 implementation with `MinCom' ordering of the indices. Energy, enstrophy and total vorticity.}
	\label{fig:22x22_MultiN4_EZF}
\end{figure}

\subsection{Some comments and concluding remarks}
With the increase of the grid dimension, we have observed that the splitting methods display an increase in error accumulation, in particular for the total vorticity (first momentum), which is an integral of the semi-discrete problem, as are the energy and the enstrophy (in contrast to the third moment, which was not an integral of the discretization). The total vorticity had overall a consistent error propagation worse than  the energy and the enstrophy.

The statistical predictions for the QG flow are valid under the assumption that energy, enstrophy and total linear momentum are conserved. When this is not any longer the case, the numerical computed asymptotic values do not reflect any longer the theoretical predictions of the model, and the computed $\mu$ parameter shows a clear linear trend instead of converging to an asymptotic value.
Thus, particular care has to be used when using a splitting method. The ordering of the vector fields matters for error propagation and we have proposed an algorithm that produces an ordering of the vector fields that is nearly optimal. 

When a volume preserving method is used, with nearby-conservation of first integrals, the ergodicity property of the flow is respected and the departure from the nearby integrals can be used as a criterium of how good the averaged result is, in these case for which the statistical quantities depend on the first integrals of the system.
If the volume is not preserved, the preservation of integrals alone might give false positives, as generally no estimation of how much the (high-dimensional) space shrinks or expands is available, hence no indication of how reliable the estimated statistical quantities are. In \cite{MR2442396} the volume preservation of the projected Heun method and of the Implicit Midpoint Rule was studied, by computing the determinant of the Jacobian of the discrete map, $ \partial \mathbf{q}^{n+1}/\partial \mathbf{q}^n$. It was observed that a projected Heun method (preserving the integrals), computed the expected average parameters, with a far too fast convergence. Further, it was observed that the volume was strongly contracted, implying that only a smaller region of the space was visited by the trajectory. They also tested thoroughly the Implicit Midpoint Rule, which preserved intrinsically the integrals to machine precision. The IMR is a geometric integrator and backward error analysis guarantees that it solves a nearby Hamiltonian problem for exponentially long time. The IMR does not preserve volume however it is symplectic, therefore  it preserves the sum of 2-dimensional projected areas: for a shorter simulation time and a smaller number of particles ($N=12$), and it was seen to slightly increase (about $3\times 10^{-3}$ for $10^4$ steps with stepsize $0.1$).

In our experience, if the statistical parameters were required up to $d$ decimals, the relative error for the energy and enstrophy should be no larger than $10^{-d}$ in magnitude.

\section*{Acknowledgements}
We wish to acknowledge helpful discussion with Jason Frank, who first introduced us to this problem and provided us with invaluable feedback during this work, Robert McLachlan for suggesting higher order splitting method and Sebastian Reich for discussion.
\bibliographystyle{alpha}

\end{document}